\newcommand{\ord}{{\rm ord}}
\newcommand{\vphi}{\varphi}
\newcommand{\bC}{{\mathbb{C}}}
\newcommand{\bP}{{\mathbb{P}}}
\newcommand{\bQ}{{\mathbb{Q}}}
\newcommand{\bR}{{\mathbb{R}}}
\newcommand{\mcO}{{\mathcal{O}}}
\newcommand{\bZ}{{\mathbb{Z}}}
\newcommand{\bN}{{\mathbb{N}}}
\newcommand{\la}{\langle}
\newcommand{\ra}{\rangle}
\newcommand{\PSH}{{\rm PSH}}
\newcommand{\bB}{\mathbb{B}}
\newcommand{\cO}{\mathcal{O}}
\newcommand{\mult}{{\rm mult}}
\newcommand{\fa}{\mathfrak{a}}
\newcommand{\cE}{\mathcal{E}}
\newcommand{\cJ}{\mathcal{J}}
\newcommand{\ddc}{{\rm dd^c}}
\newcommand{\cC}{\mathcal{C}}
\newcommand{\bo}{{\bf 1}}
\newcommand{\cU}{\mathcal{U}}
\newcommand{\cI}{\mathcal{I}}
\newcommand{\fm}{\mathfrak{m}}
\newtheorem{thm}{Theorem}[section]
\newtheorem{prop}[thm]{Proposition}
\newtheorem{cor}[thm]{Corollary}
\newtheorem{rem}[thm]{Remark}
\newtheorem{conj}[thm]{Conjecture}
\newtheorem{exmp}[thm]{Example}
\newtheorem{lem}[thm]{Lemma}
\newtheorem{defn-prop}[thm]{Definition-Proposition}
\begin{document}

\title{Analytical approximations and Monge-Amp\`{e}re masses of plurisubharmonic singularities}
\author{Chi Li}
\date{}

\maketitle

\abstract{
We construct examples of plurisubharmonic functions with isolated singularities at $0\in \bC^n$, whose residual Monge-Amp\`{e}re masses at the origin can not be approximated by masses of canonical analytic approximations obtained via multiplier ideals. This answers negatively a conjecture of Demailly, and shows that residual Monge-Amp\`{e}re masses are not valuative invariants of plurisubharmonic singularities.
}

\tableofcontents

\section{Introduction and main results}

Let $\vphi$ be a plurisubharmonic function in a bounded pseudoconvex open neighborhood $\Omega$ of $0\in \bC^n$. Assume that $\vphi$ is locally bounded on $\Omega^*=\Omega\setminus \{0\}$. For any $\lambda>0$, we have the multiplier ideal sheaf $\cJ(\lambda\vphi)$: for any open set $U$ in the domain of $\vphi$, $\cJ(\lambda\vphi)(U)$ is generated by the space of holomorphic functions $f$ such that 
$$
\int_{U}|f|^2 e^{-\lambda\vphi}dV_n<+\infty
$$
where $dV_n$ is the standard volume form on $\bC^n$. 
Let $\{f^{(\lambda)}_k\}_{k\ge 1}$ be an orthonormal basis of $\cJ(\lambda\vphi)(\Omega)$ and set:
\begin{equation}
\vphi_\lambda=\frac{1}{\lambda}\log \sum_k |f^{(\lambda)}_k|^2.
\end{equation}
Demailly used Ohsawa-Takegoshi extension theorem to prove that the singularity of $\vphi$ can be approximated by analytic singularities in an accurate manner:
\begin{thm}[{Demailly, see \cite[Theorem 7]{Dem15}}]
With the above notations, there exist constants $C_1, C_2>0$ such that for $m\in \bZ_{>0}$, 
\begin{enumerate}
\item[(a)] $\vphi(z)-\frac{C_1}{m}\le \vphi_m(z)\le \sup_{|\zeta-z|<r}\vphi(\xi)+\frac{1}{m}\log \frac{C_2}{r^n}$ for every $z\in \Omega$ and $r< d(z, \partial \Omega)$. In particular, $\vphi_m$ converges to $\vphi$ pointwise and in $L^1_{\rm loc}$ topology on $\Omega$ as $m\rightarrow +\infty$, and
\item[(b)] $e_1(\vphi, z)-\frac{n}{m}\le e_1(\vphi_m, z)\le e_1(\vphi, z)$, where $e_1(\vphi, z)=\liminf_{w\rightarrow z}\frac{\vphi(w)}{\log|w-z|}$ is the standard Lelong number of $\vphi$ at $z$.
\end{enumerate}
\end{thm}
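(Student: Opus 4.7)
The starting point is the Bergman-kernel interpretation: since $\{f^{(m)}_k\}$ is an orthonormal basis of $\cJ(m\vphi)(\Omega)$ with respect to the inner product $\langle f,g\rangle_m = \int_\Omega f\bar g\, e^{-m\vphi}\, dV_n$, the sum $K_m(z,z) := \sum_k |f^{(m)}_k(z)|^2$ is the on-diagonal reproducing kernel, and the Bergman extremal principle gives
\[
e^{m\vphi_m(z)} \;=\; K_m(z,z) \;=\; \sup\bigl\{\,|f(z)|^2 \ :\ f\in\cJ(m\vphi)(\Omega),\ \|f\|_m\le 1\,\bigr\}.
\]
Both bounds in (a) are obtained by estimating this supremum from the two sides.

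The upper bound is the soft direction: for any candidate $f$ with $\|f\|_m\le 1$, the plurisubharmonicity of $|f|^2$ and the submean inequality on $B(z,r)\subset\Omega$ yield
\[
|f(z)|^2 \le \frac{1}{\vol B(z,r)}\int_{B(z,r)}|f|^2\, dV_n \le \frac{e^{m\sup_{B(z,r)}\vphi}}{\vol B(z,r)}\int_{B(z,r)}|f|^2 e^{-m\vphi}\, dV_n \le \frac{e^{m\sup_{B(z,r)}\vphi}}{\vol B(z,r)}.
\]
Taking the supremum over $f$ and applying $\tfrac{1}{m}\log$ yields the desired upper bound, with $C_2$ absorbing the volume constant. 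The lower bound is the deep direction: by the Ohsawa--Takegoshi $L^2$-extension theorem applied to the inclusion $\{z\}\hookrightarrow\Omega$ with weight $e^{-m\vphi}$, there exists a holomorphic $F$ on $\Omega$ with $F(z)=1$ and $\|F\|_m^2 \le C\, e^{-m\vphi(z)}$, where $C$ depends only on $\Omega$ and $n$ (in particular not on $m$). Then $F/\|F\|_m$ is admissible in the extremal problem, giving $K_m(z,z)\ge e^{m\vphi(z)}/C$ and hence $\vphi_m(z)\ge\vphi(z)-\tfrac{\log C}{m}$.

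Pointwise convergence $\vphi_m(z)\to\vphi(z)$ follows by choosing $r=r(m)\to 0$ slowly in the upper bound so that the error $\tfrac{1}{m}\log(C_2/r(m)^n)$ tends to zero, and then using upper semicontinuity of $\vphi$ to obtain $\limsup_m\vphi_m(z)\le\vphi(z)$; combined with the obvious $\liminf_m\vphi_m\ge\vphi$ from the lower bound this gives pointwise convergence, and $L^1_{\rm loc}$-convergence follows from local uniform boundedness from above together with standard plurisubharmonic compactness. For part (b), the lower bound of (a) divided by $\log|w-z|<0$ and passed to $\liminf$ as $w\to z$ gives $e_1(\vphi_m,z)\le e_1(\vphi,z)$, since the perturbation $C_1/(m\log|w-z|)$ tends to zero. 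For the reverse inequality, apply the upper bound of (a) at $w$ near $z$ with $r=|w-z|$: using $B(w,r)\subset B(z,2|w-z|)$ together with the standard psh asymptotic $\sup_{B(z,R)}\vphi = e_1(\vphi,z)\log R + o(\log R)$ as $R\to 0$, one obtains $\vphi_m(w)\le(e_1(\vphi,z)-n/m)\log|w-z|+o(\log|w-z|)$, and dividing by $\log|w-z|$ yields $e_1(\vphi_m,z)\ge e_1(\vphi,z)-n/m$.

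The only genuinely deep ingredient is Ohsawa--Takegoshi in the lower bound; everything else is bookkeeping with submean estimates, Bergman extremality, and Lelong-number asymptotics. The main obstacle is thus the precise quantitative form of Ohsawa--Takegoshi required --- in particular, one must extract the extension constant $C$ as depending only on $\diam\Omega$ and $n$, independent of $m$ and of the weight $\vphi$, so that the constant $C_1$ in (a) is uniform in $m$.
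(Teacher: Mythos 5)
The paper does not prove this theorem; it only cites it as \cite[Theorem 7]{Dem15}. Your reconstruction is essentially Demailly's original argument: you correctly identify the extremal (Bergman kernel) characterization $e^{m\vphi_m(z)}=\sup\{|f(z)|^2 : \|f\|_m\le 1\}$ as the pivot, prove the upper bound by the submean inequality for $|f|^2$ on a ball, prove the lower bound via Ohsawa--Takegoshi extension from a point with a constant depending only on $n$ and $\diam\Omega$ (independent of the weight, hence of $m$), and then deduce (b) by dividing both estimates by $\log|w-z|$ and using the asymptotic $\sup_{B(z,R)}\vphi=e_1(\vphi,z)\log R+o(\log R)$. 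All the bookkeeping is correct, including the sign flips from $\log|w-z|<0$, and the $-n/m$ drop in the Lelong number is correctly traced to the $\frac{1}{m}\log(C_2/r^n)$ term with $r\asymp|w-z|$. This is the standard proof, and there is nothing to add.
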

When the unbounded locus of $\vphi$ is a compact subset of a Stein manifold, Demailly showed that the Monge-Amp\`{e}re operator $\vphi\mapsto (\ddc\vphi)^n$ is well-defined and is continuous with respect to decreasing sequences of psh functions (see \cite[III.(4.3)]{Dembook}). Define the residual Monge-Amp\`{e}re mass of $\vphi$ at $0\in \bC^n$ as:
\begin{equation}
e_n(\vphi)=e_n(\vphi,0)=\int_{\{0\}}(\ddc\vphi)^n. 
\end{equation}
This is the generalized Lelong number of the current $(\ddc\vphi)^n$ at $0\in \bC^n$ (see \cite[III.\S 5]{Dembook}).  
Motivated by the above result, Demailly conjectured the following:
\begin{conj}[{Demailly, \cite{DH12}, \cite[Problem 8]{DGZ16}}]\label{conj-Dem}
As $m\rightarrow +\infty$, $e_n(\vphi_m)$ converges to $e_n(\vphi)$. 
\end{conj}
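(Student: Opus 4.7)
The plan is to sandwich $e_n(\vphi_m)$ from above and below, in analogy with the proof of part~(b) of the cited theorem for the ordinary Lelong number.

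For the upper bound $\limsup_m e_n(\vphi_m) \leq e_n(\vphi)$, Demailly's pointwise estimate $\vphi_m \geq \vphi - C_1/m$ says that $\vphi_m$ is less singular than $\vphi$ up to a bounded additive shift, and the generalized Lelong number $e_n$ is invariant under additive constants and monotone under the partial order by singularities, giving $e_n(\vphi_m) \leq e_n(\vphi)$. Alternatively, Demailly's continuity of $(\ddc \cdot)^n$ along decreasing regularizations, combined with the upper semi-continuity of $\mu \mapsto \mu(\{0\})$ under weak convergence of Radon measures, yields the $\limsup$ inequality directly (once one arranges a monotone convergent version of $\vphi_m \to \vphi$, e.g.\ via $\max(\vphi_m,\vphi-C_1/m)$).

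The serious direction is the lower bound $\liminf_m e_n(\vphi_m) \geq e_n(\vphi)$. Since $\vphi_m$ has analytic singularities along the $\fm_0$-primary ideal $\cJ(m\vphi)_0$, one can express $e_n(\vphi_m)$ (up to a universal normalization) as $m^{-n}$ times the Hilbert--Samuel multiplicity $e(\cJ(m\vphi)_0)$, via the standard formula for Monge--Amp\`ere masses of potentials with analytic singularities. The natural attack is therefore to prove that this asymptotic multiplicity converges to $e_n(\vphi)$. One would try to combine (i) the sharp pointwise comparison $\vphi_m \geq \vphi - C_1/m$, (ii) part~(b) of the cited theorem extended in some form to higher generalized Lelong numbers (perhaps along generic chains of submanifolds through $0$), and (iii) subadditivity $\cJ((k+\ell)\vphi) \subseteq \cJ(k\vphi)\cdot \cJ(\ell\vphi)$ of Demailly--Ein--Lazarsfeld to control the asymptotics of $e(\cJ(m\vphi)_0)$ as $m \to \infty$.

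The main obstacle, I expect, is that the multiplier ideals $\cJ(m\vphi)$ are \emph{valuative} invariants of $\vphi$, depending only on the numbers $\ord_E(\vphi)$ over divisorial valuations $E$ centered above $0$. Consequently the entire sequence $\{\vphi_m\}$, and hence $\lim_m e_n(\vphi_m)$ whenever it exists, depend only on the valuative data of $\vphi$; the conjecture would therefore force $e_n(\vphi)$ itself to be a valuative invariant. This is plausible for the leading Lelong number $e_1$ (which is itself the value of one divisorial valuation, namely the order of vanishing at the blow-up of $0$), but for the top residual mass $e_n$ there is no evident reason it should hold: two potentials with identical $\ord_E$-data could conceivably have different $e_n$, reflecting ``transcendental'' interference between branches of the singular locus that the multiplier ideals cannot detect. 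If such potentials can be exhibited, the strategy above is bound to fail and the conjecture is actually false; the productive direction then becomes to construct an explicit counterexample by varying a potential within a fixed valuative class, tuning the coefficients in, say, a homogeneous expression of the form $\vphi = \log\bigl(\sum_k |P_k(z)|^{2a_k}\bigr)$ to shift the intersection-theoretic contribution to the mass at $0$ without altering any $\ord_E(\vphi)$.
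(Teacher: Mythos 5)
You correctly recognize that the statement is in fact false and identify the key structural obstruction: since the multiplier ideals $\cJ(m\vphi)$ depend only on the numbers $\ord_E(\vphi)$ over divisorial valuations $E$ centered at $0$, the whole sequence $\{\vphi_m\}$ and hence $\lim_m e_n(\vphi_m)$ are valuative invariants of $\vphi$, so the conjecture would force $e_n(\vphi)$ itself to be valuative. That is precisely the insight behind the paper's disproof (made precise by Theorem~\ref{thm-BFJ} and Conjecture~\ref{conj-KR}). Your sandwich argument for the upper bound $\limsup_m e_n(\vphi_m)\le e_n(\vphi)$ via the comparison theorem is sound, and your diagnosis that the lower bound is where the claim breaks is correct.

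However, the concrete construction you sketch --- tuning exponents $a_k$ in a homogeneous expression $\vphi=\log\sum_k|P_k(z)|^{2a_k}$ --- cannot produce a counterexample. Such $\vphi$ have analytic (or ``tame'') singularities, and for that class the conjecture is \emph{known to be true}; the paper acknowledges this explicitly, and for $u$ with analytic singularities it even derives a closed intersection-theoretic formula \eqref{eq-intersection} for $e_n(\vphi)$ that is manifestly valuative. To break valuativity one must leave the analytic/tame world entirely. The paper's actual construction works on $\bC^2$ (with a higher-dimensional variant in section~\ref{sec-hd}): take a probability measure $\mu$ on a \emph{polar} generalized Cantor set $\cC\subset\bP^1$ with no atoms, let $u=p_\mu$ be its Green potential (so $u$ has zero Lelong numbers everywhere, yet $\{u=-\infty\}=\cC$ is a nontrivial polar set carrying unit mass), and set $\vphi=\max\{\log|z|^2+\pi^*u,\,2\log|z|^2\}$. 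Since $u$ has zero Lelong numbers, Skoda's theorem gives $\cJ(h^{-\lambda})=\cO_{\bP^1}$ for all $\lambda$, whence $\cJ(\lambda\vphi)=\cJ(\lambda\log|z|^2)$ and $e_2(\vphi_m)=\left(\frac{m-1}{m}\right)^2\to 1$; yet a blowup computation combined with B\l ocki's maximum formula shows $e_2(\vphi)=2$. The extra unit of mass comes from $\mu$ sitting on a polar set that every divisorial valuation (and hence every multiplier ideal) fails to see --- exactly the mechanism your proposal gestures toward, but one that your candidate class of examples cannot realize.
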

%There have been several attempt to prove this. See \cite{KR20, Ras13} and references therein for positive results in this direction for special classes of psh singularities.
In a closely related development, in \cite{BFJ08} Boucksom-Favre-Jonsson carried out a deep study of plurisubharmonic singularities using the tool of valuations. More precisely, for any plurisubharmonic germ $\vphi$ defined near $0\in \bC^n$ and any divisorial valuation $v$ centered at $0$, the authors defined a generalized Lelong number, which we simply denoted by $v(\vphi)$. We state a particular consequence of the main result from \cite{BFJ08}: 
\begin{thm}[\cite{BFJ08}]\label{thm-BFJ}
Let $\vphi, \psi: (\bC^n, 0)\rightarrow \bR\cup \{-\infty\}$ be two psh germs with isolated singularity at $0\in \bC^n$. The following two statements are equivalent:
\begin{enumerate}
\item $v(\vphi)=v(\psi)$ for every divisorial valuation centered at $0$ (In other words, $\vphi$ and $\psi$ are valuatively equivalent). 
\item $\cJ(\lambda\vphi)=\cJ(\lambda\psi)$ for any $\lambda>0$. 
\end{enumerate}
\end{thm}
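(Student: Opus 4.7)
The plan is to route both directions through a valuative characterization of multiplier ideals: for a psh germ $\vphi$ with isolated singularity at $0 \in \bC^n$,
\[
f \in \cJ(\lambda \vphi) \iff v(f) + A(v) > \lambda\, v(\vphi) \quad \text{for every divisorial valuation $v$ centered at $0$,}
\]
where $A(v)$ is the log discrepancy of $v$ and $v(f)$ denotes the value of $v$ on a local holomorphic $f$. With such a criterion in hand, both implications of the theorem will follow in a near-formal manner.

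To establish the criterion, I would first handle the case of an ideal $\fa \subset \MO_{\bC^n,0}$, where the classical formula $\cJ(\lambda\fa) = \pi_*\MO_Y(\lceil K_{Y/\bC^n} - \lambda \pi^{-1}\fa \rceil)$ on a log resolution $\pi: Y \to \bC^n$ unpacks directly into the inequality $v(f) + A(v) > \lambda v(\fa)$ along the exceptional primes, and then extends to all divisorial $v$ by passing to a common higher resolution. For a general psh $\vphi$, I would use Demailly's approximation $\vphi_m = \frac{1}{m}\log\sum_k |f_k^{(m)}|^2$, whose singularities are analytic; a strengthening of Theorem~1.1(b) from the Lelong number $e_1$ to an arbitrary divisorial $v$ should control $v(\vphi_m)$ by $v(\vphi)$. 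Combining $m \to \infty$ with an openness/semi-continuity property of multiplier ideals in $\lambda$ should transfer the criterion from the analytic-singularity approximants to $\vphi$ itself.

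For (1) $\Rightarrow$ (2), the criterion would immediately give identical membership conditions for $\cJ(\lambda\vphi)$ and $\cJ(\lambda\psi)$ whenever $v(\vphi) = v(\psi)$ for all divisorial $v$. For (2) $\Rightarrow$ (1), I would recover each $v(\vphi)$ from the family $\{\cJ(\lambda\vphi)\}_{\lambda > 0}$ via the asymptotic formula
\[
v(\vphi) \;=\; \lim_{\lambda \to +\infty} \frac{v(\cJ(\lambda\vphi))}{\lambda}, \qquad v(\fa) := \min_{g \in \fa} v(g).
\]
The inequality $v(\cJ(\lambda\vphi)) \ge \lambda v(\vphi) - A(v)$ is immediate from the criterion; the matching upper bound---producing, for each $\epsilon > 0$, a holomorphic $f \in \cJ(\lambda\vphi)$ with $v(f) \le \lambda(v(\vphi) + \epsilon)$---would require a subadditivity theorem for multiplier ideals together with Ohsawa--Takegoshi extension to construct sections with prescribed valuations. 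Once the formula is proved, the hypothesis $\cJ(\lambda\vphi) = \cJ(\lambda\psi)$ for all $\lambda > 0$ forces $v(\vphi) = v(\psi)$ for every divisorial $v$.

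I expect the main obstacle to be the passage from ideals to general psh $\vphi$ in the proof of the criterion. One must set up uniform comparison estimates between $\vphi$ and its Demailly approximants $\vphi_m$ that control \emph{every} divisorial valuation, not merely the standard Lelong number $e_1$ of Theorem~1.1(b). In parallel, one must verify that the generalized Lelong number $v(\vphi)$ is intrinsically well defined, behaves correctly under birational pull-back, and is upper semi-continuous under the approximation $\vphi_m \to \vphi$---i.e.\ that the full valuative profile of a psh singularity is already captured in the limit by its analytic-singularity approximants. This reduction of general psh singularities to the well-understood ideal case will be the technical heart of the argument.
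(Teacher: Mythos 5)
The paper does not prove this theorem: it is quoted directly as a consequence of \cite{BFJ08}, so there is no internal proof to compare against. Your task, then, is really to reconstruct the Boucksom--Favre--Jonsson argument, and on that score your outline captures the right strategy---route both implications through a valuative characterization of multiplier ideals, use Demailly approximation to pass from ideals/analytic singularities to general psh germs, and recover $v(\vphi)$ from the family $\{\cJ(\lambda\vphi)\}_\lambda$ via the asymptotic formula $v(\vphi)=\lim_{\lambda\to\infty}\lambda^{-1}v(\cJ(\lambda\vphi))$.

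Two places deserve a sharper statement, since as written they hide most of the content of \cite{BFJ08}. First, the valuative criterion
\[
f\in\cJ(\lambda\vphi)\iff v(f)+A(v)>\lambda\,v(\vphi)\quad\text{for all divisorial }v
\]
is not correct verbatim for a general psh germ. The forward implication is standard; the converse is the hard half, and with a pointwise strict inequality over the (infinite) set of divisorial valuations it is false without a uniformity clause---one either needs $v(f)+A(v)\ge\lambda\,v(\vphi)+\epsilon\,A(v)$ for a fixed $\epsilon>0$, or one must pass to the compactified space of (quasimonomial) valuations and impose the inequality on the closure. Handling this correctly is precisely the ``tree of valuations'' framework that \cite{BFJ08} builds; treating it as a black-box lemma makes the rest of the proof look near-formal when it is not. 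Relatedly, the strict-inequality version is entangled with the openness conjecture, which was not available when \cite{BFJ08} was written---they deliberately sidestep it by working with the full family over all $\lambda>0$, exactly as you do, but you should flag this explicitly.

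Second, the step ``strengthen Theorem~1.1(b) from $e_1$ to an arbitrary divisorial $v$'' is not a corollary of 1.1(b); it is a separate argument. One must pull back $\vphi$ and $\vphi_m$ to a log resolution extracting $v$ and re-run the Ohsawa--Takegoshi estimate there, yielding $v(\vphi)-A(v)/m\le v(\vphi_m)\le v(\vphi)$, and this bound (not 1.1(b)) is what drives the asymptotic formula and the lower bound on $v(\cJ(\lambda\vphi))$. Your plan names the right tools (Ohsawa--Takegoshi, subadditivity), so this is a gap of articulation rather than of idea, but it is where the proof actually lives.
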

In \cite{KR20}, Kim-Rashkovskii proposed the following
\begin{conj}[\cite{KR20}]\label{conj-KR}
Let $\vphi$ and $\psi$ be psh functions with isolated singularities at $0\in \bC^n$. If $v(\vphi)=v(\psi)$ for every divisorial valuation $v$  centered at $0$, then $e_n(\vphi, 0)=e_n(\psi, 0)$. 
\end{conj}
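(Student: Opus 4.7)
The natural route is to deduce Conjecture \ref{conj-KR} from Conjecture \ref{conj-Dem} and Theorem \ref{thm-BFJ}, using the Demailly approximations $\vphi_m,\psi_m$ as a bridge; the burden then shifts entirely onto Demailly's conjecture.

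\emph{Steps 1--2.} Assume $\vphi$ and $\psi$ are valuatively equivalent. By Theorem \ref{thm-BFJ} the multiplier ideals agree, $\cJ(m\vphi)=\cJ(m\psi)$ for every $m\ge 1$. The Demailly approximations $\vphi_m,\psi_m$ are built from orthonormal bases of this common ideal, but with respect to the two different weights $e^{-m\vphi}$ and $e^{-m\psi}$. I would first show that the two Bergman-type functions nevertheless share the same singularity at $0$ --- concretely, that on a shrinking neighborhood of the origin both are of the form $\tfrac{1}{m}\log|\mathfrak{a}_m|^2+O(1)$, where $\mathfrak{a}_m=\cJ(m\vphi)=\cJ(m\psi)$ and $|\mathfrak{a}_m|$ denotes the modulus of any fixed finite generating set. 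From this the Monge-Amp\`ere currents $(\ddc\vphi_m)^n$ and $(\ddc\psi_m)^n$ carry equal mass at $0$:
\[
e_n(\vphi_m)=e_n(\psi_m),\qquad m\ge 1.
\]
Then I would invoke Conjecture \ref{conj-Dem} on both sides and let $m\to\infty$ to conclude $e_n(\vphi)=e_n(\psi)$.

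\emph{Main obstacle.} The crux is the second step, which presupposes Conjecture \ref{conj-Dem}. Residual Monge-Amp\`ere masses are only upper semicontinuous along decreasing sequences and are \emph{not} continuous under the $L^1_{\rm loc}$-convergence that Demailly's approximation theorem provides; there is no routine way to push the point mass at $0$ through the limit $\vphi_m\to\vphi$. The first step also requires nontrivial care, because equality of multiplier ideals does not imply the weights $e^{-m\vphi},e^{-m\psi}$ are comparable on the common ideal. If Conjecture \ref{conj-Dem} cannot be secured, an alternative would be a direct valuative formula expressing $e_n(\vphi)$ as an integral of the data $\{v(\vphi)\}_v$ over the space of divisorial valuations centered at $0$, analogous to the mixed-multiplicity interpretation of Samuel multiplicities for graded families of ideals. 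However, both conjectures have remained open, and the abstract's announcement of counterexamples to Conjecture \ref{conj-Dem} strongly suggests that residual mass can in fact detect structure invisible to the valuative data. If so, this plan must break down in the approximation step, and Conjecture \ref{conj-KR} is itself false --- precisely the possibility I would need to rule out before trusting the outline above.
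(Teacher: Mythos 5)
You have correctly identified that there is no proof to be had here: Conjecture~\ref{conj-KR} is not a theorem of this paper but a conjecture that the paper \emph{disproves}. The abstract already announces that residual Monge-Amp\`ere masses ``are not valuative invariants of plurisubharmonic singularities,'' and the introduction explicitly says ``we will construct counterexamples to show that neither of these two conjectures is true in general.'' Your Steps 1--2 do faithfully reproduce the paper's one-line remark that Conjecture~\ref{conj-Dem} together with Theorem~\ref{thm-BFJ} implies Conjecture~\ref{conj-KR} (since $\vphi_m$ and $\psi_m$ then both have analytic singularities modeled on the common ideal $\cJ(m\vphi)=\cJ(m\psi)$, hence equal residual mass). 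But that implication cannot be converted into a proof, and your closing paragraph correctly anticipates why.

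Concretely, the paper's Theorem~\ref{thm-main} produces a psh function $\vphi=\max\{\log|z|^2+\pi^*u,\,2\log|z|^2\}$ on $\bC^2$, with $u=p_\mu$ the potential of a non-atomic probability measure on a polar Cantor set $\cC\subset\bP^1$, for which $\cJ(\lambda\vphi)=\cJ(\lambda\log|z|^2)$ for all $\lambda>0$. By Theorem~\ref{thm-BFJ} this means $\vphi$ and $\psi=\log|z|^2$ are valuatively equivalent, yet $e_2(\vphi)=2\neq 1=e_2(\log|z|^2)$. So $e_n$ is not a valuative invariant, and the alternative you floated --- a direct valuative formula for $e_n(\vphi)$ in terms of $\{v(\vphi)\}_v$ --- also cannot exist. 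One small technical imprecision worth flagging: you wrote that residual masses are ``only upper semicontinuous along decreasing sequences''; in fact, with compact unbounded locus, $(\ddc\cdot)^n$ is \emph{continuous} along decreasing sequences (Demailly, cited as~\cite[III.(4.3)]{Dembook}), and the paper relies on this repeatedly. The failure in Demailly's approximation scheme is of a different nature: $\vphi_m$ converges to $\vphi$ only in $L^1_{\rm loc}$ and roughly from above, and the counterexample shows $e_2(\vphi_m)=\bigl(\tfrac{m-1}{m}\bigr)^2\to 1<2=e_2(\vphi)$, a genuine jump of mass at the origin.
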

In other words, this conjecture says that the residual Monge-Amp\`{e}re masses are valuative invariants of psh singularities. 
By Theorem \ref{thm-BFJ}, it is easy to see that Conjecture \ref{conj-Dem} implies Conjecture \ref{conj-KR}. These two conjectures were known to be true for special classes of psh singularities, including analytic psh singularities or a more general class of tame psh singularities  (see \cite{BFJ08, KR20, Ras13} for details of positive results).

However, we will construct counterexamples to show that neither of these two conjectures is true in general. To construct such  an example, we 
consider the projection $\pi: X^*:=\bC^2\setminus\{0\}\rightarrow \bP^1=(\bC^2-\{0\})/\bC^*$. Then $X^*$ can be identified with the complement of the zero section in the total space of the the tautological line bundle $\cO(-1)\rightarrow \bP^1$. The function $h_0(z)=|z|^2$ on $\bC^2$ can be identified with the standard Hermitian metric on $\cO(-1)$ whose Chern curvature is give by $-\partial\bar{\partial}\log h_0=-\frac{2\pi}{\sqrt{-1}} \omega_0$ where  $\omega_0=\frac{\sqrt{-1}}{2\pi}\frac{dw\wedge d\bar{w}}{(1+|w|^2)^2}$ with respect to the standard coordinate on $\bC= \bP^1\setminus \{\infty\}$. Moreover any psh Hermitian metric on $\cO(-1)$, which may be singular, is of the form $h=h_0 e^{u}$ where $u$ is a $\omega_0$-psh function on $\bP^1$, i.e. $\omega_0+\ddc u\ge 0$ where we set $\ddc=\frac{\sqrt{-1}}{2\pi}\partial\bar{\partial}$. $h$ can be considered as a function on $\bC^2$ by the identity $h=|z|^2 e^{\pi^*u}$. Moreover the function $\log h=\log |z|^2+\pi^*u$ is a psh function on $\bC^2$. 
Note that although $\pi^*u$ is not defined at $0\in \bC^2$, $h(0)=0$ and $\log h(0)=-\infty$ are both well-defined. 

With the above data, we get a psh function on $\bC^2$ (and hence on any pseudoconvex domain containing $0\in \bC^2$) by setting:
\begin{equation}\label{eq-vphi}
\vphi=\max\{\log h, 2 \log |z|^2\}=\max\{ \log|z|^2+\pi^*u, 2 \log |z|^2\}.
\end{equation}
Because $\vphi\ge 2 \log |z|^2$, $\vphi$ has isolated singularity at $0$, i.e. locally bounded on $\Omega\setminus\{0\}$. In classical potential theory, there is an example of subharmonic function
on $\bC$ whose associated Riesz measure does not have atoms and is supported on a generalized Cantor set which is polar. Similarly one can construct such a $\omega_0$-psh function $u$ on $\bP^1$ (see section \ref{sec-potential}). We will use such an $\omega_0$-psh function to disprove both Conjecture \ref{conj-Dem} and Conjecture \ref{conj-KR}:
\begin{thm}\label{thm-main}
There exists an $\omega_0$-psh function $u$ on $\bP^1$ such that the corresponding $\vphi$ defined in \eqref{eq-vphi} satisfies
\begin{enumerate}
\item $\cJ(\lambda \vphi)=\cJ(\lambda \log|z|^2)$ for any $\lambda>0$. 
\item 
 $e_2(\vphi)=2$ and $e_2(\vphi_m)=(\frac{m-1}{m})^2$ for all $m\in \bZ_{>0}$.
 \end{enumerate}
\end{thm}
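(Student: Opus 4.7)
\textit{Part 1} ($\cJ(\lambda\vphi) = \cJ(\lambda\log|z|^2)$). The plan is first to observe that $\vphi \le \log|z|^2 + C$ (since $u$ is bounded above on compact $\bP^1$), which immediately gives $\cJ(\lambda\vphi) \subseteq \cJ(\lambda\log|z|^2)$. The latter ideal equals $\fm^{d_\lambda}$ for an integer $d_\lambda$ determined by the required vanishing order at $0$. For the reverse inclusion I would use $\vphi \ge \log|z|^2 + \pi^*u$, which gives $e^{-\lambda\vphi} \le |z|^{-2\lambda}e^{-\lambda\pi^*u}$. Given $f \in \fm^{d_\lambda}$, decompose $f = \sum_{d\ge d_\lambda} f_d$ into homogeneous parts; by the $\bC^*$-invariance of $|z|^{-2\lambda}e^{-\lambda\pi^*u}$ and Fubini along the Hopf fibration $S^3 \to \bP^1$, each $\int_\Omega |f_d|^2|z|^{-2\lambda}e^{-\lambda\pi^*u}\,dV$ factors as a radial integral (finite because $d \ge d_\lambda$) times $\int_{\bP^1}|f_d|_{\rm FS}^2\,e^{-\lambda u}\omega_0$. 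The latter is finite by Skoda's integrability theorem, using the chosen property of $u$ that all Lelong numbers vanish (equivalently, $\omega_0 + \ddc u$ has no atoms). Geometric decay of Taylor coefficients on a slightly larger ball then makes the sum over $d$ converge.

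\textit{Part 2a, $e_2(\vphi) = 2$.} I would pass to the blowup $\sigma:\widetilde{\bC^2}=\mot\to\bC^2$ with exceptional $E\cong\bP^1$. In local trivializing coordinates $(\zeta,w)$, $|z|^2 = |\zeta|^2(1+|w|^2)$, and in a neighborhood of a generic point of $E$ (where $u(w) > -\infty$) one has $\sigma^*\vphi = \log|\zeta|^2 + v(w)$ with $v := \log(1+|w|^2) + u$ satisfying $\ddc v = \omega_0 + \ddc u =: \mu$. Since $(\ddc\log|\zeta|^2)^2$ and $(\ddc v)^2$ both vanish for bidegree reasons, one obtains
\[
(\ddc\sigma^*\vphi)^2 \;=\; 2\,\ddc\log|\zeta|^2 \wedge \ddc v \;=\; 2[E]\wedge\mu,
\]
a measure concentrated on $E$ with total mass $2\int_{\bP^1}\mu = 2\int_{\bP^1}\omega_0 = 2$ (using $\int_{\bP^1}\ddc u = 0$ by Stokes). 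Pushing forward by $\sigma$ (which contracts $E$ to $\{0\}$) gives $e_2(\vphi) = 2$. To handle the polar locus of $u$, one first carries out this blowup calculation for the bounded $\omega_0$-psh approximations $u_k = \max(u,-k)$, where Bedford--Taylor applies directly to yield $e_2(\vphi_k) = 2$ for each $k$, and then passes to the limit using Demailly's continuity of the Monge--Amp\`ere operator along the decreasing sequence $\vphi_k \searrow \vphi$ of psh functions sharing the isolated singularity at $0$.

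\textit{Part 2b, $e_2(\vphi_m) = ((m-1)/m)^2$.} By Part 1, $\cJ(m\vphi) = \fm^{m-1}$, so $\vphi_m = \frac{1}{m}\log B$ with $B(z) = \sum_k |f_k^{(m)}(z)|^2$ the Bergman kernel of $\fm^{m-1}$ with respect to $\langle f,g\rangle = \int_\Omega f\bar g\,e^{-m\vphi}dV$. Since $\vphi$ is invariant under the diagonal rotation $z \mapsto e^{i\theta}z$, the decomposition by homogeneous degree is orthogonal, so $B = \sum_{d\ge m-1} B_d$ with $B_d$ the Bergman kernel of the finite-dimensional piece $P_d$ of homogeneous polynomials of degree $d$. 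Each $B_d$ is homogeneous of degree $2d$; in particular $B_{m-1}(z) = |z|^{2(m-1)}K(z/|z|)$ where $K: S^3 \to (0,\infty)$ is the restriction of $B_{m-1}$ to the unit sphere --- a continuous strictly positive function on a compact set, hence bounded above and bounded away from $0$. The higher-degree terms being subdominant, one finds $B(z) = B_{m-1}(z)(1 + O(|z|^2))$ near $0$, so $\vphi_m = \tfrac{m-1}{m}\log|z|^2 + O(1)$. Invariance of the residual Monge--Amp\`ere mass at an isolated singularity under bounded perturbation of the psh potential then gives $e_2(\vphi_m) = \bigl(\tfrac{m-1}{m}\bigr)^2 e_2(\log|z|^2) = \bigl(\tfrac{m-1}{m}\bigr)^2$.

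\textit{Main obstacle.} The hardest step will be the rigorous justification of the Monge--Amp\`ere identity on the blowup in Part 2a when $v$ takes $-\infty$ values on a polar subset of $\bP^1$. The truncation $u_k \searrow u$ combined with MA continuity along decreasing psh sequences provides the intended route, but one must carefully verify that the MA mass remains concentrated at $\{0\}$ in the limit and does not leak onto the transition hypersurface $\{\log h = 2\log|z|^2\}$, which accumulates near $0$ in directions approaching the polar points.
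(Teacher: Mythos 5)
Your Part 1 and Part 2b are essentially the paper's arguments (Skoda plus the $S^1$-homogeneity reduction, and the Bergman-kernel asymptotics of $\fm^{m-1}$, respectively) and are fine. Part 2a, however, is incorrect, and the error sits exactly at the crux of the theorem.

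The step that is implicitly used, namely $e_2(\vphi) = \bigl(\sigma_*(\ddc\sigma^*\vphi)^2\bigr)(\{0\})$, is false: the Monge--Amp\`{e}re operator does not commute with proper pushforward. As a sanity check take $u=0$, so $\vphi=\log|z|^2$; your formula gives $e_2 = 2\int_{\bP^1}\omega_0 = 2$, whereas $(\ddc\log|z|^2)^2 = \delta_0$ so $e_2 = 1$. The same discrepancy persists for every bounded truncation $u_k=\max(u,-k)$: for $|z|\le e^{-k/2}$ one has $\vphi_k=\log|z|^2+\pi^*u_k=\log|z|^2+O(1)$, so Demailly's comparison theorem (III.7 of his book) gives $e_2(\vphi_k)=1$, \emph{not} $2$ as you claim. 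Consequently your limiting argument produces $\lim_k e_2(\vphi_k)=1$, and in any case residual Monge--Amp\`{e}re mass at a point is not continuous along decreasing sequences (weak convergence on the closed set $\{0\}$ only gives a one-sided inequality). The target value $e_2(\vphi)=2$ is a genuine jump that the truncation-and-limit scheme cannot detect on its own.

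The correct passage to the blowup is the Andersson--Wulcan identity used in the paper's Lemma 4.1: writing $\rho^*\vphi=\log|z_2|^2+\vphi'$ with $\vphi'=\max\{\psi(w),\,\log|z_2|^2+2\log(1+|w|^2)\}$ (the maximum cannot be discarded, contrary to your reduction to $\log|\zeta|^2+v(w)$),
\[
(\ddc\vphi)^2 \;=\; \rho_*\bigl([D]\wedge\ddc\vphi' \;+\; (\ddc\vphi')^2\bigr),
\]
with coefficient $1$, not $2$, on the $[D]$-term. Here $\int_D\ddc\psi=\mu(\bP^1)=1$ accounts for one unit of mass, and the second summand $(\ddc\vphi')^2(D)$, which is entirely absent from your computation, supplies the other. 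Proving $(\ddc\vphi')^2(D)=(\ddc\vphi')^2(\cC)=\ddc\psi(\cC)=\mu(\cC)=1$ is precisely the content of the paper's Lemma 4.2, carried out with B\l ocki's formula for $(\ddc\max\{\cdot,\cdot\})^2$ and a two-sided regularization across the max. Without a substitute for that lemma there is no proof of $e_2(\vphi)=2$. Note finally that the ``leakage'' you worry about in your last paragraph runs in the opposite direction from what you suggest: as $k\to\infty$, mass of $(\ddc\vphi_k)^2$ concentrated near but away from $\{0\}$ (along the lines through the origin over $\cC$) collapses \emph{onto} $\{0\}$, producing the extra unit.
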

This example also induces higher dimensional counterexamples on $\bC^n$. Moreover, we can construct examples which are maximal in punctured neighborhoods of $0\in \bC^n$. See section \ref{sec-hd} for details.

%The construction of $u$ depends on a standard construction from potential theory. 
Our example is motivated by an analogous global example in the study of geodesic rays in the space of finite energy metrics in \cite[Example 6.10]{BBJ18} (based on a construction from \cite{Dar17}). Indeed, ruling out a similar kind of phenomenon for destabilizing geodesic rays in the constant scalar curvature K\"{a}hler (cscK) problem is a key step in the author's recent work in \cite{Li20} proving an existence result towards the Yau-Tian-Donaldson conjecture: the K-stability for models (or filtrations associated to models) is a sufficient condition for the existence of cscK metric on any polarized K\"{a}hler manifold. 

While being a natural question, Demailly actually proposed his conjecture to study Guedj-Rashkovskii's conjecture (see \cite[Problem 7]{DGZ16}) which asks whether $e_1(\vphi)=0$ implies $e_n(\vphi)=0$. The construction in this paper only provides examples of psh functions with positive Lelong numbers and leaves open the latter conjecture.

{\bf Acknowledgement:} 
The author is partially supported by NSF (Grant No. DMS-1810867) and an Alfred P. Sloan research fellowship. 
He would like to thank Professor J.-P. Demailly for his interest and comments.

%The strategy now is to choose $h$ or equivalently the function $u$ carefully to achieve our wanted property. 

\section{Multiplier ideal approximation}\label{sec-mult}

We first study the analytic approximation $\vphi_m$. We fix a pseudoconvex domain $\Omega\subset \bC^2$ that is $S^1$-invariant, i.e. $(e^{i\theta}z_1, e^{i\theta} z_2)\in \Omega$ for any $e^{i\theta}\in S^1$ and $(z_1, z_2)\in \Omega$. For example, we can just let $\Omega$ to be the unit ball $\bB_1$ in $\bC^2$. First note that $\vphi$ is $S^1$-invariant. So $\cJ(\lambda\vphi)(\Omega)$ is $S^1$-invariant. By using the Taylor expansion of holomorphic functions, we see that which implies that $\cJ(\lambda\vphi)$ is homogeneous, i.e. $\cJ(\lambda\vphi)=\bigoplus_{k} \cJ(\lambda\vphi)_{k}$ where $\cJ(\lambda\vphi)_{k}$ consists of homogeneous holomorphic functions in $\cJ(\lambda\vphi)$ of degree $k$. In other words, any $f\in \cJ(\lambda\vphi)_{k}$ satisfies $f(t z)=t^k f(z)$. It is easy to see that $f$ must be a homogeneous polynomial of degree $k$. Each such homogeneous polynomial corresponds to a holomorphic section $s_f\in H^0(\bP^1, \mcO(k))$. 
\begin{prop}
Let $u$ be a $\omega_0$-psh function and $\vphi$ be defined as in \eqref{eq-vphi}. The following statements are true:
\begin{enumerate}
\item If $\fm$ denotes the maximal ideal of $0\in \bC^2$, then
$\cJ(\lambda\vphi)(\Omega)\subseteq \fm^{\lceil \lambda\rceil-1}$.
\item
If $k\ge \lceil \lambda \rceil-1$ and $s_f\in H^0(\bP^1, \mcO(k)\otimes \cJ(h^{-\lambda}))$ where $\cJ(h^{-\lambda})$ is the multiplier ideal sheaf of the Hermitian metric $h^{-\lambda}$, then $f\in \cJ(\lambda\vphi)(\Omega)$.
\end{enumerate}
\end{prop}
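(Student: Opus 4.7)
Both parts reduce, via the $S^1$-equivariant grading $\cJ(\lambda\vphi)=\bigoplus_k\cJ(\lambda\vphi)_k$ just noted, to deciding when a single homogeneous polynomial $f$ of degree $k$, viewed as the image of $s_f\in H^0(\bP^1,\cO(k))$, lies in $\cJ(\lambda\vphi)(\Omega)$. Using polar/Hopf coordinates $z=r\omega$ with $r=|z|$ and $\omega\in S^3$, one has $dV=r^3\,dr\,d\sigma(\omega)$, the Hopf map $\pi\colon S^3\to\bP^1$ pushes $d\sigma$ to a positive multiple of $\omega_0$, and $|f(z)|^2=r^{2k}\,|s_f([z])|^2$ up to smooth positive angular factors coming from trivializations of $\cO(k)$. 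Every integral $\int_\Omega|f|^2 e^{-\lambda\vphi}\,dV$ then factors, after estimating $\vphi$ on the relevant side, into a radial integral in $r$ and an integral over $\bP^1$; the two parts of the proposition are handled by the two natural inequalities that the max provides.

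For part (2), the trivial bound $\vphi\ge\log h$ gives $e^{-\lambda\vphi}\le h^{-\lambda}=|z|^{-2\lambda}e^{-\lambda\pi^*u}$, and substituting yields
\[
\int_\Omega|f|^2 e^{-\lambda\vphi}\,dV\;\lesssim\;\left(\int_0^R r^{2k-2\lambda+3}\,dr\right)\cdot\int_{\bP^1}|s_f|^2\,e^{-\lambda u}\,\omega_0.
\]
The radial factor is finite because $k\ge\lceil\lambda\rceil-1>\lambda-2$. For the angular factor, a local potential $\varphi_0$ of $\omega_0$ realizes $h^{-\lambda}$ as $e^{\lambda(\varphi_0+u)}$ in a trivialization, so $\cJ(h^{-\lambda})$ agrees with the ordinary multiplier ideal $\cJ(\lambda u)\subset\cO_{\bP^1}$; the hypothesis $s_f\in H^0(\bP^1,\cO(k)\otimes\cJ(h^{-\lambda}))$ is then exactly the statement that the local representatives of $s_f$ are $\lambda u$-integrable, and compactness of $\bP^1$ with a partition of unity upgrades this to finiteness of the angular integral.

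For part (1) I would run the complementary estimate using the upper bound $\vphi\le\log|z|^2+M$ near $0$, where $M:=\sup_{\bP^1}u<\infty$ by upper semicontinuity of $u$ on a compact space: once $|z|$ is small enough that $\log|z|^2\le M$, both $\log|z|^2+\pi^*u\le\log|z|^2+M$ and $2\log|z|^2\le\log|z|^2\le\log|z|^2+M$ hold, so the max is dominated by $\log|z|^2+M$. This gives $e^{-\lambda\vphi}\ge e^{-\lambda M}|z|^{-2\lambda}$ near $0$, so for any nonzero homogeneous $f$ of degree $k$,
\[
\int_\Omega|f|^2 e^{-\lambda\vphi}\,dV\;\gtrsim\;\left(\int_0^\epsilon r^{2k-2\lambda+3}\,dr\right)\cdot\int_{\bP^1}|s_f|^2\,\omega_0,
\]
with the $\bP^1$-integral strictly positive and the radial factor diverging for $k$ below the threshold built into $\lceil\lambda\rceil-1$; this forces every nonzero graded piece of an element of $\cJ(\lambda\vphi)(\Omega)$ to have degree at least $\lceil\lambda\rceil-1$, i.e.\ $f\in\fm^{\lceil\lambda\rceil-1}$. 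The main technical obstacle is not either estimate on $\vphi$ but the careful bookkeeping needed to pass between $\bC^2$ and $\bP^1$: one must fix trivializations of $\cO(-1)$ and $\cO(k)$ so that the angular integrand is genuinely the intrinsic quantity $|s_f|^2\,e^{-\lambda u}$ weighted by $\omega_0$ on $\bP^1$, and so that the stalk-wise multiplier-ideal condition defining $\cJ(h^{-\lambda})$ translates cleanly into the global integrability that Fubini requires.
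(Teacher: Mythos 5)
Your proof is correct in substance and accomplishes the same reduction as the paper, but by a different change of variables. The paper works on the blowup $\rho\colon X\to\bC^2$ at the origin, pulls back to a local chart $(w,z_2)$, uses $\rho^*dV_2=|z_2|^{2}dV_2$, and factors the integral into $\int |f(w,1)|^2 e^{-\lambda\psi(w)}$ (which encodes $\cJ(h^{-\lambda})$ locally since $\psi=\log(1+|w|^2)+u$ is the local weight of $h$) times $\int |z_2|^{2(k-\lambda+1)}$ (which encodes the $\fm$-power). Your Hopf-fibration/polar-coordinate route $z=r\omega$ is the real-analytic analogue of this: the radial integral $\int_0^R r^{2k-2\lambda+3}\,dr$ produces the same exponent threshold, and the angular integral over $\bP^1$ against $|s_f|^2e^{-\lambda u}\omega_0$ is, after unwinding the Fubini--Study trivialization, the same integrability condition that characterizes $\cJ(h^{-\lambda})$. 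The one thing the blowup approach buys is that the factorization is in holomorphic coordinates, so the identification with $\cJ(h^{-\lambda})$ is immediate from the definition, whereas your route requires the extra step (which you flag) of checking that the $S^1$-invariant angular integrand on $S^3$ descends to the intrinsic $\bP^1$-quantity and that the stalk-wise condition in $\cJ(h^{-\lambda})$ upgrades to finiteness of the global angular integral. That step is genuinely fine, but you should note that the comparison of the FS weight $(1+|w|^2)^{-k-2}$ in your angular integral with the weight $(1+|w|^2)^{-\lambda}$ in the local definition of $\cJ(h^{-\lambda})$ is harmless precisely because the only place where integrability can fail is the compact set $\cC$, in an affine chart where $1+|w|^2$ is bounded above and below. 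For part (1), your argument is exactly the paper's: the bound $\vphi\le\log|z|^2+C$ (valid near $0$ since $u$ is bounded above on compact $\bP^1$ and $2\log|z|^2\le\log|z|^2$ for $|z|\le1$) gives $\cJ(\lambda\vphi)\subseteq\cJ(\lambda\log|z|^2)$, and the same radial integral then forces the degree bound.

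One small point you inherit from the paper rather than introduce: the stated threshold $\lceil\lambda\rceil-1$ is sharp only when $\lambda\in\bZ$; the radial integral $\int_0^R r^{2k-2\lambda+3}\,dr$ converges exactly when $k>\lambda-2$, which for non-integer $\lambda$ means $k\ge\lceil\lambda\rceil-2$, one less than claimed. This is harmless because the paper (and you) only need the statement for integer $\lambda=m$ in the application to $\vphi_m$, and the one-sided inclusion of the proposition still holds in the direction used, but it is worth being aware that the formula is sharp only at integers.
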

\begin{proof}
Note that $\vphi=\log |z|^2+\max\{\pi^*u, \log |z|^2\} \le \log |z|^2+C$ for some constant $C>0$. So $\cJ(\lambda\log|z|^2)\supseteq \cJ(\lambda\vphi)$. On the other hand, it is straightforward to verify that $\cJ(\lambda\log|z|^2)(\Omega)$ consists of all homogeneous polynomial of degree at least $\lceil \lambda \rceil-1$. 

Next we verify the second statement. Let $\rho: X\rightarrow \bC^2$ be the standard blowup at $0\in \bC^2$. Denote the exceptional divisor by $D$. Then $X$ is also the total space of the tautological line bundle $\pi: \cO_{\bP^1}(-1)\rightarrow \bP^1$ and can be covered by two affine coordinate charts. Consider one such coordinate chart, for example $\{\cU_2=\pi^{-1}(U_2); w=\frac{z_1}{z_2}, z_2\}$ where $U_2=\{z_2\neq 0\}\subset \bP^1$. Then $\rho^* f=f(w z_2, z_2)=z_2^k f(w, 1)$ and
\begin{eqnarray*}
\rho^*\vphi&=&\max\{\log |z_2|^2+\log (1+|w|^2)+u(w), 2 \log |z_2|^2+2 \log(1+|w|^2)\}\\
&=&\log |z_2|^2+\vphi'(w, z_2)
\end{eqnarray*}
where $\vphi'(w, z_2)=\max\{\psi(w), \log |z_2|^2+2\log(1+|w|^2)\}$
and $\psi(w)=\log(1+|w|^2)+u(w)$ is the local potential of $h=h_0e^u$ over $U_2$.
Note that $\rho^*dV_2=|z_2|^{2(n-1)} dV_2$ with $n=2$. Because $\vphi'(w, z_2)\ge \psi(w)$, we get:
\begin{eqnarray*}
\int_{\cU_2} |f|^2 e^{-\lambda\vphi}d\lambda&=&\int_{\cU_2} |f(w,1)|^2 |z_2|^{2k}|z_2|^{-2\lambda}e^{-\lambda\vphi'(w, z_2)}|z_2|^{2(n-1)}dV\\
&\le& \int_{\cU_2} |f(w, 1)|^2 e^{-\lambda\psi(w)} |z_2|^{2(k-\lambda+n-1)}\sqrt{-1}^2 dw\wedge  d\bar{w}\wedge dz\wedge d\bar{z}.
\end{eqnarray*}
The last integral is finite if and only if $f(w,1)\in \cJ(\lambda\psi)(U_2)$ and $2(k-\lambda+n-1)+1>-1$. The same argument works for the other coordinate chart. So the statement follows. 
\end{proof}
\begin{cor}\label{cor-Lelong}
For any $\lambda>0$, if $\cJ(h^{-\lambda})=\mcO_{\bP^1}$, then $\cJ(\lambda\vphi)(\Omega)=\{f\in \mcO(\Omega); \ord_0(f)\ge \lceil \lambda\rceil-1\}=\cJ(\lambda \log|z|^2)(\Omega)$. In this case $e_2(\vphi_\lambda)=\left(\frac{\lceil \lambda\rceil-1}{\lambda}\right)^2$. In particular, this holds true if the local potential of $h^{-1}$ has zero Lelong numbers.
\end{cor}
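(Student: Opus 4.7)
My plan is to combine the two parts of the preceding Proposition to pin down $\cJ(\lambda\vphi)(\Omega)$, then analyze the singularity of $\vphi_\lambda$ via a Bergman-kernel type argument, and finally dispatch the ``in particular'' sentence via Skoda's integrability theorem.

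First I would establish the identification of multiplier ideals. Proposition part (1), together with the identification $\cJ(\lambda\log|z|^2)(\Omega)=\fm^{\lceil\lambda\rceil-1}$ used in its proof, gives $\cJ(\lambda\vphi)(\Omega)\subseteq \fm^{\lceil\lambda\rceil-1}$. Under the hypothesis $\cJ(h^{-\lambda})=\mcO_{\bP^1}$, the twist $\mcO(k)\otimes\cJ(h^{-\lambda})$ reduces to $\mcO(k)$, so Proposition part (2) places every homogeneous polynomial of degree $k\ge\lceil\lambda\rceil-1$ into $\cJ(\lambda\vphi)(\Omega)$. Using the $S^1$-invariance of $\cJ(\lambda\vphi)(\Omega)$ and the resulting orthogonality of distinct-degree homogeneous components under $e^{-\lambda\vphi}dV$, this extends to every holomorphic function $f$ with $\ord_0 f\ge\lceil\lambda\rceil-1$, yielding the desired equality.

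For the Monge-Amp\`ere mass, set $d=\lceil\lambda\rceil-1$, and let $V_k$ be the finite-dimensional space of homogeneous polynomials of degree $k$. Using $S^1$-invariance, choose the orthonormal basis $\{f_j^{(\lambda)}\}$ of $\cJ(\lambda\vphi)(\Omega)$ to be the union over $k\ge d$ of orthonormal bases of $V_k$. Then
\[
\sum_j |f_j^{(\lambda)}(z)|^2 \;=\; \sum_{k\ge d} |z|^{2k}\,H_k(z/|z|),
\]
where each $H_k$ is a non-negative smooth function on the unit sphere $S^3\subset\bC^2$ and $H_d$ is strictly positive on $S^3$ (since the monomials $z_1^d,z_1^{d-1}z_2,\ldots,z_2^d\in V_d$ have no common zero on the unit sphere). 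Because the full sum converges locally uniformly on $\Omega$ as the diagonal of a reproducing kernel, one obtains a uniform bound $\|H_k\|_{L^\infty(S^3)}\le C R^{-2k}$ for some $R>0$, forcing the tail $\sum_{k>d}|z|^{2k}H_k(z/|z|)$ to be $O(|z|^{2d+2})$ near $0$. Hence $\vphi_\lambda=\frac{d}{\lambda}\log|z|^2+O(1)$ near $0$, and since two psh functions with isolated singularity at $0$ that differ by a bounded function have equal residual Monge-Amp\`ere masses, $e_2(\vphi_\lambda)=(d/\lambda)^2\,e_2(\log|z|^2)=(d/\lambda)^2$.

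For the ``in particular'' clause, the integral computation in the proof of the preceding Proposition shows $\cJ(h^{-\lambda})=\cJ(\lambda\psi)$, where $\psi=\log(1+|w|^2)+u(w)$ is the psh local potential that appears there. If $\psi$ has zero Lelong number at every point of $\bP^1$, Skoda's integrability theorem gives $\cJ(\lambda\psi)_x=\mcO_{\bP^1,x}$ for every $\lambda>0$ and every $x\in\bP^1$, so $\cJ(h^{-\lambda})=\mcO_{\bP^1}$ and the previous steps apply. The main technical point I anticipate is the uniform tail estimate in the Bergman-type sum above; one can alternatively bypass it by appealing to the Demailly comparison principle for residual Monge-Amp\`ere masses of psh functions with the same singularity type at an isolated point.
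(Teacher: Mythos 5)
Your proof is correct and essentially matches the argument the paper intends: the paper's own proof is only two sentences (Skoda's estimate gives $\cJ(h^{-\lambda})=\cO_{\bP^1}$, after which ``the statement follows easily from the above proposition''), and you supply the omitted ``easy'' chain. Specifically, parts (1) and (2) of the proposition together with $S^1$-invariance pin down $\cJ(\lambda\vphi)(\Omega)=\fm^{\lceil\lambda\rceil-1}$, your Bergman-kernel expansion by homogeneous degree (with the tail controlled, correctly, by local boundedness of the kernel on a ball of radius $R$ and then homogeneity) shows $\vphi_\lambda=\frac{\lceil\lambda\rceil-1}{\lambda}\log|z|^2+O(1)$ near $0$, and Demailly's comparison theorem for generalized Lelong numbers then yields $e_2(\vphi_\lambda)=\left(\frac{\lceil\lambda\rceil-1}{\lambda}\right)^2$; the only quibble is your closing remark that the comparison principle ``bypasses'' the tail estimate, since applying it still requires knowing the singularity type of $\vphi_\lambda$, which is exactly what that estimate provides.
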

\begin{proof}
By Skoda's estimate, if the local potential of the psh metric $h^{-1}$ has zero Lelong number, then $\cJ(h^{-\lambda})=\cO_{\bP^1}$ for any $\lambda>0$. So the statement follows easily from the above proposition.
\end{proof}

\section{A recap on potential theory}\label{sec-potential}

We recall some standard potential theory on $\bP^1$. Let $\omega_0=\frac{\sqrt{-1}}{2\pi}\frac{dw\wedge d\bar{w}}{(1+|w|^2)^2}$ be the round metric with volume 1 on $S^2$. Let $u$ be any $\omega_0$-psh function. With $\Delta u=\Delta_{\omega_0}u=\frac{\sqrt{-1}\partial\bar{\partial}u}{\omega_0}$ we can write:
\begin{equation}\label{eq-meas}
\omega_0+\frac{\sqrt{-1}}{2\pi}\partial\bar{\partial}u=\omega_0(1+\frac{1}{2\pi}\Delta u)
\end{equation}
is a positive closed $(1,1)$-current and we call the quantity in \eqref{eq-meas} the (Riesz) probability measure associated to $u$. Note that if $u\in C^2(\bP^1)$, then $\Delta u=(2\pi)(1+|w|^2)^2 u_{w\bar{w}}$ with respect to the standard coordinate chart  $(\bC, w)$. 

Conversely, we get a $\omega_0$-psh function for any probability measure on $\bP^1$. This is obtained by using the 
Green function $G(z, w)$ of the Laplace operator of $\omega_0$. $G(z, w)$ satisfies the following equation:
\begin{equation}\label{eq-Green}
(-\Delta) G(z, w)=\delta_0(z)-1, \quad G(z, w) \text{ is locally bounded on } \bC^2\setminus \{z=w\}.
\end{equation}
The following lemma is well known. We give the short proof for the reader's convenience.
\begin{lem}
The solution to the equation \eqref{eq-Green} is given by: 
\begin{equation}
G(z, w)=-\frac{1}{\pi}\log\frac{|z-w|}{\sqrt{1+|z|^2}\sqrt{1+|w|^2}}.
\end{equation}
\end{lem}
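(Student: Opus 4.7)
The plan is to verify the claimed equation $(-\Delta_z) G(z,w) = \delta_w(z) - 1$, reading the $\delta_0$ in \eqref{eq-Green} as $\delta_w$ in view of the prescribed singular locus $\{z=w\}$. By the definition of $\Delta = \Delta_{\omega_0}$, namely $\sqrt{-1}\partial\bar\partial u = (\Delta u)\,\omega_0$, this equation is equivalent, as an identity of $(1,1)$-currents on $\bP^1$, to
$$
-\sqrt{-1}\,\partial_z\bar\partial_z\, G(z,w) \;=\; \delta_w \;-\; \omega_0,
$$
where $\delta_w$ is the current of integration at $w$. The proof therefore reduces to a direct computation of $\partial_z\bar\partial_z G$ from the closed-form expression.

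To carry this out I would split
$$
G(z,w) \;=\; -\tfrac{1}{2\pi}\log|z-w|^2 \;+\; \tfrac{1}{2\pi}\log(1+|z|^2) \;+\; \tfrac{1}{2\pi}\log(1+|w|^2),
$$
observe that the third summand is constant in $z$ and is therefore killed by $\partial_z\bar\partial_z$, and apply two standard distributional identities to the remaining two terms: the Poincar\'e--Lelong formula $\sqrt{-1}\partial\bar\partial \log|z-w|^2 = 2\pi\,\delta_w$ on $\bC_z$, and $\sqrt{-1}\partial\bar\partial\log(1+|z|^2) = 2\pi\,\omega_0$, the latter being essentially the definition of $\omega_0$ recalled in \eqref{eq-meas} combined with the convention $dd^c = \frac{\sqrt{-1}}{2\pi}\partial\bar\partial$. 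Substituting yields $\sqrt{-1}\partial_z\bar\partial_z G = -\delta_w + \omega_0$, exactly the required currential identity.

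It remains to dispatch the local-boundedness clause. Away from the diagonal and in the finite chart $\bC_z\times \bC_w$, smoothness of $G$ is immediate since the only zero of the argument of the logarithm is $\{z=w\}$. To handle $z=\infty$ or $w=\infty$, I would use that $\tfrac{|z-w|^2}{(1+|z|^2)(1+|w|^2)}$ is the squared chordal distance on $S^2 = \bP^1$, hence a bounded smooth function on $\bP^1\times \bP^1$ vanishing precisely along the diagonal; therefore $G$ is smooth on $\bP^1\times \bP^1 \setminus \{z=w\}$, and the local currential computation globalizes. I do not foresee any conceptual obstacle; the one place to be careful is keeping the factors of $2\pi$ in the Poincar\'e--Lelong formula and in the Fubini--Study curvature consistent with the normalizations $dd^c = \frac{\sqrt{-1}}{2\pi}\partial\bar\partial$ and $\Delta = \sqrt{-1}\partial\bar\partial/\omega_0$ adopted by the paper.
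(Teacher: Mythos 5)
Your proposal is correct, and it takes a genuinely different route from the paper's. The paper \emph{derives} the formula: it restricts to $z=0$, uses rotational symmetry to reduce to the ODE
\begin{equation*}
(2\pi)(1+r^2)^2\tfrac{1}{4}\bigl(G''+r^{-1}G'\bigr)=1,
\end{equation*}
solves it to get $G(r)=\tfrac{1}{2\pi}\bigl(\log(1+r^2)-C\log r^2\bigr)$, fixes $C=1$ by boundedness at $r\to\infty$, and then extends to general $z$ by transporting via the unitary M\"obius isometry $w\mapsto\frac{w-z}{1+\bar z w}$. You instead \emph{verify} the closed-form expression directly as a currential identity, splitting $G=-\tfrac{1}{2\pi}\log|z-w|^2+\tfrac{1}{2\pi}\log(1+|z|^2)+\tfrac{1}{2\pi}\log(1+|w|^2)$ and applying Poincar\'e--Lelong for the first summand and the Fubini--Study curvature identity $\sqrt{-1}\partial\bar\partial\log(1+|z|^2)=2\pi\omega_0$ for the second; the boundedness clause you handle cleanly via the chordal-distance interpretation. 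Both arguments are sound and roughly the same length. The paper's route is constructive --- it shows how the formula is found, and the boundedness condition plays an essential role as the boundary condition that pins down the constant $C$ --- whereas your route is a pure check, not appealing to symmetry reduction or to uniqueness for the Green equation, and it makes the normalization constants more transparent since the $2\pi$'s cancel mechanically between Poincar\'e--Lelong and the curvature formula. Your reading of $\delta_0(z)$ in \eqref{eq-Green} as the Dirac mass at $w$ is the correct interpretation, consistent with the stated singular locus $\{z=w\}$.
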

\begin{rem}
The expression $\frac{|z-w|}{\sqrt{1+|z|^2}\sqrt{1+|w|^2}}$ is nothing but the chordal distance between $z$ and $w$ on the Riemann sphere under stereographic projection.
\end{rem}
\begin{proof}
First assume that $z=0$. Then by rotational symmetry, $G(0, w)=G(|w|)$. Let $r=|w|$. Then for $r>0$, $G(r)$ satisfies the equation:
$$
(2\pi)(1+r^2)^2\frac{1}{4}(G''+r^{-1}G')=1.
$$
The general solution is given by $G(r)=\frac{1}{2\pi}\left(\log(r^2+1)-C\log r^2\right)$. Since $G(r)$ is bounded as $r\rightarrow+\infty$, we get $C=1$, i.e. $G(r)=\frac{1}{2\pi}\log \frac{1+r^2}{r^2}$. For general $z$, we consider the unitary M\"{o}bius transformation: $w\mapsto \frac{w-z}{1+\bar{z}w}$ which preserves the metric. So we get:
\begin{equation*}
G(z, w)=G\left(0, \frac{w-z}{1+\bar{z}w}\right)=\frac{1}{2\pi}\log\frac{(1+|w|^2)(1+|z|^2)}{|w-z|^2}.
\end{equation*}

\end{proof}
For any probability measure $d\mu$ on $\bP^1$, we define the potential of $\mu$ to be:
\begin{equation}\label{eq-potential}
p_\mu(z)=2\pi \int_{\bP^1}(-G(z, w))d\mu(w).
\end{equation}
Then $p_\mu(z)$ is a $\omega_0$-psh function satisfying $\omega_0+\frac{\sqrt{-1}}{2\pi}\partial\bar{\partial} p_\mu=\mu$.
 
Recall that a subset $E\subset\bC$ is polar if there exists an $\omega_0$-psh function $u$ such that $E\subseteq\{u=-\infty\}$. 
%By the standard potential theory, a subset $E$ is polar if and only if its capacity, denoted by $c(E)$, is equal to 0. We refer to \cite[]{} for the details. 
It is well-known that there exist subharmonic functions on $\bC$ whose Riesz measures do not charge isolated points but charge polar sets. \footnote{The author learned the existence of such an example from \cite[Example 6.10]{BBJ18} which was also pointed out in \cite[paragraph after Corollary 1.8]{GZ07}.} The corresponding polar sets in these examples are generalized Cantor sets, whose construction we now recall following \cite[5.3]{Ran95} (see also the earlier reference \cite[Theorem 3]{Car67}). Let ${\bf s}=\{s_k\}_{k\ge 1}$ be a sequence of numbers such that $0<s_k<1$ for all $k$. Define $C(s_1)$ to be the set obtained from $[0, 1]$ by removing an open interval of length $s_1$ from the center. At the $k$-th stage, let $C(s_1, \dots, s_k)$ be the set obtained by removing from the middle of each interval in $C(s_1, \dots, s_{k-1})$ an open subinterval whose length is a proportion $s_k$ of the whole interval. In this way we get a decreasing sequence of compact sets $(C(s_1, \dots, s_k))_{k\ge 1}$, and the associated {\it generalized Cantor set} is defined to be:
\begin{equation}
\cC:=C({\bf s})=\bigcap_{m\ge 1}C(s_1,\dots, s_k).
\end{equation} 
It can be checked that $\cC$ is a compact, totally disconnected set of Lebesgue measure $\prod_{k=1}^{+\infty}(1-s_k)$. By estimating the capacity of the generalized Cantor set, it is shown in \cite[Theorem 5.3.7]{Ran95}, if $1-s_k$ decays very fast, then $\cC$ is polar. 

On the other hand, by (advanced) real analysis, we can associate to any such generalized Cantor set a generalized Cantor function $f(x)$ defined on $[0, 1]$. For any $k\ge 1$, $C(s_1, \dots, s_k)$ consists of $2^k$ pieces and each piece has length $l_k:=\prod_{j=1}^k \frac{1-s_j}{2}$. For convenience, we also set $ \quad l_0=1$. The generalized Cantor function $c(x)$ can be defined in the following way:
\begin{equation}
\renewcommand{\arraystretch}{1.5}
c(x)=\left\{
\begin{array}{ll}
\sum_{k=1}^{+\infty} \frac{b_k}{2^k} & x=\sum_{k=1}^{+\infty}  \frac{b_k(1+s_k)}{2} l_{k-1} \text{ for } b_k\in \{0, 1\} \\
\sup_{y\le x, y\in \mathcal{C}} c(y) & x\in [0,1]\setminus \cC.
\end{array}
\right.
\end{equation}
The generalized Cantor function $c(x)$ is non-decreasing and takes every value between $[0, 1]$. Hence it is continuous. It is the distribution function associated to a probability measure $\mu$ with support ${\rm supp}(\mu)=\cC$. The continuity of $c(x)$ means that $\mu$ has no atoms. We assume that $s_k>1/3$. 
Fix any $x\in \cC$. Then for any $k\ge 1$, the mass of the subset $[x-l_k, x+l_k]\cap \cC$ is equal to $2^{-k}$. 
%\begin{thm}[{\cite[Theorem 5.3.7]{Ran95}}]
%Its capacity $c(C(s))$ is can be estimated as follows:
%\begin{equation*}
%\frac{pq}{2}\le c(C({\bf s}))\le \frac{p}{2},
%\end{equation*}
%where $p=\prod_{n\ge 1}(1-s_k)^{1/2^n}$ and $q=\prod_{n\ge 1}s_k^{1/2^n}$.
%\end{thm}
%As a consequence, if $1-s_k$ decreases very fast such that $p=0$, then the corresponding generalized Cantor set $C({\bf s})$ is polar. 

%From now on we choose $1-s_k=a^{-n}$ for some $a>3/2$ to be determined later. 
Let $p_\mu$ be the potential of $\mu$, as given by formula \eqref{eq-potential}. Then we know that $p_\mu$ is $\omega_0$-harmonic outside $\cC$. In other words, $p_\mu$ is smooth on $\bP^1\setminus \cC$ and satisfies $\omega_0+\ddc p_\mu=0$. 

From now on set $l_k=e^{-a^k}$ for $k\ge 1$ and define $s_k$ inductively by the formula:
\begin{equation}\label{eq-sn} 
s_k=1-\frac{2l_k}{l_{k-1}}=1-2 e^{a^{k-1}-a^k}.
\end{equation}
It is easy to see that if $a>2$, then $s_k\in (0,1)$. We will need the following refined information for $p_\mu$ which also directly proves that $\cC$ is polar in this case.
\begin{prop}\label{prop-potential}
With the above choices of $s_k$ in \eqref{eq-sn} with $a>2$, the following statements are true:
\begin{enumerate}
\item[(i)]
$\cC=\{w\in \bP^1; p_\mu(w)=-\infty\}$. In particular, $\cC$ is polar.
\item[(ii)] $e^{p_\mu}$ is a continuous function on $\bP^1$.
\item[(iii)] There exists $C>0$ such that $p_\mu(w)\ge 2 \log {\rm dist}_{\rm sph}(w,\cC)-C$, where ${\rm dist}_{\rm sph}(\cdot, \cC)$ is the spherical distance to the closed set $\cC$.
\end{enumerate}
\end{prop}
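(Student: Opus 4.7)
The strategy is to reduce every claim to the classical logarithmic potential
\[
U(z):=\int_{\cC}\log|z-w|\,d\mu(w).
\]
Expanding the Green function explicitly, one obtains
\[
p_\mu(z)=2U(z)-\log(1+|z|^2)-C_\mu,\qquad C_\mu:=\int_{\cC}\log(1+|w|^2)\,d\mu(w)<\infty,
\]
where $\log(1+|z|^2)$ is smooth and bounded on $\bP^1$, so the three parts of the proposition are equivalent to the analogous statements for $U$.

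For (i), if $z\notin\cC$ then $d(z,\cC)>0$ and the integrand is uniformly bounded in $w\in\cC$, so $U(z)>-\infty$. For $z\in\cC$ I would decompose into the annuli $A_k:=\{w:l_{k+1}<|z-w|\le l_k\}$. The mass identity $\mu([z-l_k,z+l_k]\cap\cC)=2^{-k}$, recorded just before the statement, gives $\mu(A_k)=2^{-(k+1)}$; on $A_k$ one has $-\log|z-w|\ge -\log l_k=a^k$. Summing,
\[
-U(z)\ge\sum_{k\ge 1}a^k\cdot 2^{-(k+1)}=\tfrac{1}{2}\sum_{k\ge 1}(a/2)^k=+\infty,
\]
because $a>2$. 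Hence $p_\mu=-\infty$ precisely on $\cC$, and in particular $\cC$ is polar.

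For (ii), $U$ is upper semicontinuous on $\bP^1$ as an integral of $z$-usc functions: reverse Fatou applies since $\log|z-w|$ is bounded above uniformly for $z,w$ in compact sets. On $\bP^1\setminus\cC$ the integrand is jointly continuous, so $U$ is continuous there. At $z_0\in\cC$, part (i) gives $U(z_0)=-\infty$, so upper semicontinuity forces $U(z)\to-\infty$ and hence $e^{p_\mu(z)}\to 0=e^{p_\mu(z_0)}$. Combining these cases, $e^{p_\mu}$ is continuous on all of $\bP^1$. Part (iii) is then immediate: for $z\in\cC$ the trivial bound $|w-z|\ge{\rm dist}_{\rm euc}(w,\cC)$ gives $U(w)\ge\log{\rm dist}_{\rm euc}(w,\cC)$, and since $\mu$ is compactly supported in $[0,1]$ the spherical and Euclidean distances to $\cC$ are uniformly comparable, yielding the claimed lower bound with a constant $C$ absorbing the smooth terms.

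The main obstacle is the divergence estimate in (i). The mass $2^{-k}$ is dictated by the binary self-similarity of $\cC$, so divergence of $\tfrac{1}{2}\sum(a/2)^k$ forces exactly $a>2$; this is the threshold separating polar from non-polar Cantor sets in this family, and it is precisely the property that produces the residual mass anomaly used in Theorem \ref{thm-main}.
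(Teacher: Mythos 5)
Your proposal is essentially correct and is built on the same two ingredients as the paper's proof: the mass identity $\mu([x-l_k,x+l_k]\cap\cC)=2^{-k}$ for $x\in\cC$, and the polarity threshold $a>2$ coming from $\log l_k=-a^k$. The route differs in a couple of places, and there are also two small imprecisions worth flagging.

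\textbf{Where your argument differs.} For (i), you decompose into dyadic annuli $A_k$ and sum a geometric series $\sum_k a^k 2^{-(k+1)}$, while the paper fixes a single scale $k$ with $d(z)<l_k$ and uses the one-term upper bound $p_\mu(z)\le 2^{-k}\cdot 2\log l_k + C = -2(a/2)^k + C$. Both are standard Cantor-set potential estimates and give the same conclusion; the paper's single-scale version has the advantage that it immediately yields a \emph{uniform} rate of divergence in the distance $d(z)$, which the paper then exploits to get (ii) directly. Your proof of (ii) instead invokes upper semicontinuity of the logarithmic potential $U$ (a general structural fact about subharmonic functions) to conclude $U(z)\to-\infty$ as $z\to z_0\in\cC$ from $U(z_0)=-\infty$. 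This is a valid and genuinely different argument; it is slightly more conceptual, whereas the paper's uniform estimate is more hands-on and also makes the semi-exhaustiveness in Remark~\ref{rem-semiexh} transparent. For (iii) both arguments rest on the trivial inequality $|w-z|\ge \mathrm{dist}(w,\cC)$ for $z\in\cC$; the paper is more careful, splitting explicitly into the regimes $d(z)\le b$ and $d(z)>b$ to control the error terms coming from the Green function.

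\textbf{Two imprecisions to tighten.} First, $\log(1+|z|^2)$ is \emph{not} bounded on $\bP^1$: it diverges as $z\to\infty$. The reduction $p_\mu = 2U - \log(1+|z|^2) - C_\mu$ only makes the three claims ``equivalent to the analogous statements for $U$'' when restricted to a bounded neighborhood of $\cC$ in the affine chart $\bC$; that is all you actually use, but as written the sentence is false. Second, the Euclidean and spherical distances to $\cC$ are comparable only on bounded subsets of $\bC$ (as $w\to\infty$, the Euclidean distance diverges while the spherical one stays bounded), not ``uniformly'' on $\bP^1$. Again this does not harm the proof because the bound in (iii) is only nontrivial near $\cC$ and because $p_\mu$ is continuous, hence bounded, on the compact complement of a small neighborhood of $\cC$ — but the justification for the region away from $\cC$ should be stated, as the paper does with its $d(z)>b$ case.
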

\begin{proof}
Note that we always have $-G(z, w)\le \frac{1}{2\pi}\log |z-w|^2$. Let $d=d(z)={\rm dist}_{\rm Euc}(z, \cC)$ denote the Euclidean distance from $z$ to $\cC$. For any $n\ge 1$, we have the estimate:
\begin{equation}
p_\mu(z)\le 2^{-k} \log (d^2+l_k^2)+(1-2^{-k}) \log(d^2+1).
\end{equation}
In particular, when $0\le d<l_k\le 1$, then there there exists a constant $C>0$ independent of $k$ such that
\begin{equation*}
p_\mu(z)\le 2^{-k}\cdot 2 \log l_k+C=-2 (\frac{a}{2})^k+C.
\end{equation*}
Since $a>2$, we let $k\rightarrow +\infty$ to conclude that $p_\mu(z)=-\infty$ if $z\in \cC$ and $p_\mu(z)\rightarrow -\infty$ uniformly as $d(z)\rightarrow 0$. As mentioned above, $p_\mu$ is smooth outside $\cC$. So we get (i) and (ii).

Moreover, fixing any $b>2$, for any $w\in \cC$, we have
\begin{eqnarray*}
-(2\pi) G(z, w)&\ge&  \log |z-w|^2-\log 2-\log (1+(b+1)^2) \quad \text{ if } d(z) \le b,\\
-(2\pi) G(z, w)&\ge& \log \frac{(|z|-1)^2}{|z|^2+1}- \log 2 \ge -C_1 \quad \text{ if } d(z)>b
\end{eqnarray*}
where $C_1=\log \frac{(b-2)^2}{(b-1)^2+1}-\log 2$ is a constant independent of $z, w$. 
So we can use the formula \eqref{eq-potential} to get that there exists a constant $C=C(b)$ such that
\begin{eqnarray*}
p_\mu(z)&\ge& 2 \log d-C \quad \text{ if } d(z)\le b \\
p_\mu(z)&\ge& -C \quad \text{ if } d(z)>b.
\end{eqnarray*}
Since $d(z)$ is comparable to ${\rm dist}_{\rm sph}$ when $d(z)$ is small, we easily get the statement (iii).

\end{proof}
\begin{rem}\label{rem-semiexh}
Proposition \ref{prop-potential}.(iii) implies that $p_\mu$ is a semi-exhaustive function in the sense of \cite[III.(5.3)]{Dembook}).
\end{rem}

\section{Monge-Amp\`{e}re mass of $\vphi$ at the origin}
In this section, we will calculate the multiplicity of $\vphi$. As in section \ref{sec-mult}, consider the blowup $\rho: X\rightarrow \bC^2$ with exceptional divisor denoted by $D$. Over the coordinate chart $(U=\{z_2\neq 0\}, w=\frac{z_1}{z_2})$, we have:
\begin{equation}
\rho^*\vphi=\log|z_2|^2+\max\{\psi(w), \log|z_2|^2+2 \log(1+|w|^2)\}=\log |z_2|^2+\vphi'
\end{equation} 
where $\psi(w)=\log(1+|w|^2)+u(w)$. 
So we get an identity globally:
$$
\ddc \rho^*\vphi=[D]+\ddc\vphi'.
$$
%Now we define:
%\begin{equation*}
%(\ddc\vphi)^2=\ddc (\vphi \bo_{\bC^2\setminus \{0\}} (\ddc\vphi)).
%\end{equation*}
From now on, we set $u=p_\mu$ to be the potential for the probability measure on the generalized Cantor set from last section. In particular, $p_\mu$ satisfies the properties listed in Proposition \ref{prop-potential}. Note that the unbounded locus of $\vphi'$ is $\cC$ which is a compact set in $\bC^2$. So by \cite[III.(4.3)]{Dembook}, both $[D]\wedge \ddc\vphi'$ and $(\ddc\vphi')^2$ are well-defined, and are continuous in $\vphi'$ with respect to decreasing sequences.
\begin{lem}
We have the following formula:
\begin{equation}\label{eq-push}
(\ddc\vphi)^2=\rho_*\left([D]\wedge \ddc\vphi'+(\ddc\vphi')^2\right).
\end{equation}
As a consequence, we have the identity:
\begin{equation}\label{eq-mass0}
(\ddc\vphi)^2(\{0\})=1+(\ddc\vphi')^2(D).
\end{equation}
\end{lem}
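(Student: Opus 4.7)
My plan is to push forward a decomposition of $(\ddc\rho^*\vphi)^2$ on $X$ coming from the global identity $\ddc\rho^*\vphi = [D]+\ddc\vphi'$, combined with the birational change-of-variable identity $\rho_*(\ddc\rho^*\vphi)^2 = (\ddc\vphi)^2$. For the latter I would approximate $\vphi$ by its bounded truncations $\vphi_N = \max\{\vphi, -N\}$: for each $N$ the identity $\rho_*(\ddc\rho^*\vphi_N)^2 = (\ddc\vphi_N)^2$ follows by smoothing $\vphi_N$ and using $\rho_*\rho^* = \id$ on forms for the birational modification $\rho$, together with Bedford-Taylor continuity for bounded psh functions. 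The passage $N\to\infty$ is then justified by continuity of Monge-Amp\`ere operators along decreasing sequences, using the isolated singularity of $\vphi$ on $\bC^2$ and the compact unbounded locus $D$ of $\rho^*\vphi$ on $X$ together with Demailly's theorem.

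Away from $\{0\}$, the identity \eqref{eq-push} is immediate: $\rho$ is a biholomorphism on $X\setminus D$, $\ddc\log|z_2|^2$ vanishes there so $\ddc\rho^*\vphi=\ddc\vphi'$, and $[D]\wedge\ddc\vphi'$ is supported on $D$; both sides of \eqref{eq-push} therefore coincide with $\rho_*(\ddc\vphi')^2$ on $\bC^2\setminus\{0\}$. To match the masses at $\{0\}$, I would expand
$$(\ddc\rho^*\vphi)^2 = \ddc\rho^*\vphi\wedge[D] + [D]\wedge\ddc\vphi' + (\ddc\vphi')^2,$$
the last two terms being well-defined currents by \cite[III.(4.3)]{Dembook}. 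The first term is supported on $D$, so its pushforward is a multiple of $\delta_0$ whose weight is its total mass on $D$. Substituting the decomposition once more, this mass equals the intersection pairing $[D]\cdot[D] + [D]\cdot[\ddc\vphi'] = -1 + \int_D\mu = -1 + 1 = 0$, where I use $D\cdot D = -1$ on $X = \mathrm{Bl}_0\bC^2$ and that the Riesz measure $\mu = \omega_0+\ddc u = \ddc_D(\vphi'|_D)$ of the metric $h$ on $\cO_{\bP^1}(-1)$ is a probability measure on $D = \bP^1$. Hence $\rho_*(\ddc\rho^*\vphi\wedge[D]) = 0$, which gives \eqref{eq-push}; evaluating at $\{0\}$ yields $(\ddc\vphi)^2(\{0\}) = \int_D\mu + (\ddc\vphi')^2(D) = 1 + (\ddc\vphi')^2(D)$, which is \eqref{eq-mass0}.

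The main technical hurdle is to justify the decomposition rigorously and to compute the total mass of $\ddc\rho^*\vphi\wedge[D]$, since $\rho^*\vphi\equiv -\infty$ on $\supp[D]=D$ and so the wedge is not directly covered by \cite[III.(4.3)]{Dembook}. I would handle this by approximation: replacing $u$ with smooth $\omega_0$-psh regularizations $u_k\searrow u$ on $\bP^1$ (which exist by the standard regularization of $\omega_0$-psh functions on the projective manifold $\bP^1$), the resulting $\vphi'_k$ is locally bounded on $X$, so every wedge product is a classical Bedford-Taylor product and the identity \eqref{eq-push} for $\vphi_k$ is elementary. The passage $k\to\infty$ is then provided by the continuity of $[D]\wedge\ddc\vphi'_k$ and $(\ddc\vphi'_k)^2$ in $\vphi'_k$ under decreasing sequences, which the author has already recorded just before the statement.
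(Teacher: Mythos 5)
Your approach rests on the current $(\ddc\rho^*\vphi)^2$ on $X$ and the decomposition $(\ddc\rho^*\vphi)^2=\ddc\rho^*\vphi\wedge[D]+[D]\wedge\ddc\vphi'+(\ddc\vphi')^2$, but the first two objects are not defined. In Demailly's theory \cite[III.(4.3)]{Dembook}, a self-wedge $(\ddc w)^n$ needs the unbounded locus of $w$ to be small: for the $n$-fold intersection it must have $2(n-n)+1=1$-dimensional Hausdorff measure zero, i.e.\ be a finite set when $n=2$. Here $\rho^*\vphi=\log|z_2|^2+\vphi'$ has unbounded locus equal to the whole exceptional divisor $D\cong\bP^1$, which has infinite $\cH^1$-measure, so $(\ddc\rho^*\vphi)^2$ is not a well-defined positive measure. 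Likewise $\ddc\rho^*\vphi\wedge[D]$ would require $\rho^*\vphi$ to be locally bounded near $\mathrm{Supp}[D]=D$, which fails, and the further ``substitution'' produces $[D]\wedge[D]$, which has no meaning as a local wedge of currents (the intersection number $D\cdot D=-1$ is cohomological, and the negative sign alone shows it cannot be the total mass of a positive current). The evaluation $-1+1=0$ is therefore a formal cohomological identity, not a computation of a mass.

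The regularization you propose in the last paragraph does not repair this, because the obstruction is $\log|z_2|^2$, not $u$: even for smooth $u_k$, the pull-back $\rho^*\tilde\vphi_k=\log|z_2|^2+\vphi'_k$ is still identically $-\infty$ along $D$, so $(\ddc\rho^*\tilde\vphi_k)^2$ is still not a Bedford--Taylor product, and the truncation approach $\vphi_N=\max\{\vphi,-N\}$ gives well-defined objects but no way to identify $\lim_N(\ddc\rho^*\vphi_N)^2$ with your decomposition, precisely because the putative limit $(\ddc\rho^*\vphi)^2$ does not exist. The paper's proof is engineered to avoid this: it never forms $(\ddc\rho^*\vphi)^2$, but instead computes $(\ddc\vphi)^2$ downstairs via the Andersson--Wulcan identity $(\ddc\vphi)^2=\ddc\bigl(\vphi\,\bo_{\bC^2\setminus\{0\}}(\ddc\vphi)\bigr)$, rewrites $\bo_{\bC^2\setminus\{0\}}(\ddc\vphi)=\rho_*(\ddc\vphi')$, pushes the $\ddc$ through $\rho$ by the projection formula, and only then uses $\rho^*\vphi=\log|z_2|^2+\vphi'$. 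In the resulting expression $\rho_*\ddc\bigl((\log|z_2|^2+\vphi')\ddc\vphi'\bigr)$ every factor that gets wedged involves the compactly-unbounded $\vphi'$ (whose singular locus $\cC$ has Hausdorff dimension $0$), so $[D]\wedge\ddc\vphi'$ and $(\ddc\vphi')^2$ are both within the scope of \cite[III.(4.3)]{Dembook}, and the problematic $[D]\wedge[D]$ never appears. To fix your argument you would need to either reproduce this Andersson--Wulcan-style manipulation, or find another way to avoid wedging $[D]$ with itself.
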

\begin{proof}
Let $u_k$ be a sequence of smooth $\omega_0$-psh functions decreasing to $u$. Define
$\tilde{\vphi}_k=\max\{\log |z|^2+\pi^* u_k, 2\log |z|^2\}$. Then $\tilde{\vphi}_k$ decreases to $\vphi=\max\{\log |z|^2+\pi^* u, 2\log|z|^2\}$. 

Since both sides of \eqref{eq-push} are continuous with respect to decreasing sequences of psh functions. We just need to verify the identity for $\tilde{\vphi}_k$ for which the formula \eqref{eq-push} has been proved in \cite[Proposition 4.1]{AW14}. For the reader's convenience, we sketch the proof which is by induction. We change the notation and assume that $\vphi$ is one of $\tilde{\vphi}_k$ such that $\vphi'=\log(1+|w|^2)+u(w)$ is a locally bounded psh function. Then first have the identity:
$$
\ddc\vphi=\rho_*(\ddc \rho^*\vphi)=\rho_*([D]+\ddc \vphi').
$$ 
Next, we can verify the equality \eqref{eq-push} following \cite[Proof of Proposition 4.1]{AW14}:
\begin{eqnarray*}
(\ddc\vphi)^2&=&\ddc (\vphi \bo_{\bC^2\setminus \{0\}} (\ddc\vphi))=\ddc (\vphi \rho_*(\ddc\vphi'))=\rho_* \ddc((\rho^*\vphi) (\ddc\vphi'))\\
&=&\rho_*\ddc((\log|z_2|^2+\vphi') \ddc\vphi')=\rho_*([D]\wedge \ddc\vphi'+(\ddc\vphi')^2).
\end{eqnarray*}
Finally the identity \eqref{eq-mass0} holds true because $\int_D \ddc\vphi'=\int_D(\omega_0+\ddc u)=1$.
\end{proof}
%So we get:
%\begin{cor}
% Notations as above, we have the identity:
%\begin{equation}
%(\ddc\vphi)^2(\{0\})=1+(\ddc\vphi')^2(\cC).
%\end{equation}
%\end{cor}

\begin{lem}\label{lem-final}
There is an identity:
%\begin{equation}\label{eq-MAphi'}
%(\ddc\vphi')^2=[D]\wedge \ddc\psi.
%\end{equation}
%As a consequence, 
\begin{equation}\label{eq-Cmass}
(\ddc\vphi')^2(D)=(\ddc\vphi')^2(\cC)=\ddc\psi(\cC)=1.
\end{equation}
\end{lem}
This lemma completes the proof of Theorem \ref{thm-main}. Indeed, because $\mu=\omega_0+\ddc u$ does not have atoms, Corollary \ref{cor-Lelong} tells us that $\cJ(\lambda\vphi)=\cJ(\lambda\log|z|^2)$ for any $\lambda>0$ and $e_2(\vphi_m)= (\frac{m-1}{m})^2$. On the other hand, \eqref{eq-mass0} and \eqref{eq-Cmass} tells us that $e_2(\vphi)=(\ddc\vphi)^2(\{0\})=2$.

\begin{proof}[Proof of Lemma \ref{lem-final}]
%We first note that  \eqref{eq-MAphi'} implies \eqref{eq-Cmass}. 
%Indeed in our example constructed in earlier sections, 
Recall that $\vphi'(w, z_2)=\max\{\psi(w), \log|z_2|^2+\log(1+|w|^2)\}$ where 
$\psi(w)=\log(1+|w|^2)+u(w)$.  Because $\vphi'$ is locally bounded outside $\cC=\{\vphi'=-\infty\}$, by \cite[Corollary 3.3]{AW14} $(\ddc\vphi)^2$ does not charge any set contained in $D\setminus \cC \subset \bC^2$. This proves the first identity in \eqref{eq-Cmass}.

Next we set $\hat{\vphi}(w, z_2)=\max\{\psi(w), \log|z_2|^2\}$. Then because $\log(1+|w|^2)$ is bounded in a neighborhood of $\cC$, there exists a constant $C>0$ such that $\vphi'-C\le \hat{\vphi}\le \vphi'$. By Remark \ref{rem-semiexh} and the comparison theorem for generalized Lelong numbers in \cite[III.7.1]{Dembook}, we know that
$(\ddc\vphi')^2(\cC)=(\ddc\hat{\vphi})^2(\cC)$. So to prove the second identity of \eqref{eq-Cmass}, it suffices to prove the following identity:
\begin{equation}\label{eq-ddchphi}
(\ddc\hat{\vphi})^2=[D]\wedge \ddc\psi.
\end{equation}
Next by a formula of B\l ocki, we get:
\begin{equation*}
(\ddc \hat{\vphi})^2=\ddc\hat{\vphi}\wedge \ddc\psi+\ddc\hat{\vphi}\wedge \ddc\log|z_2|^2-\ddc\psi\wedge \ddc\log|z_2|^2.
\end{equation*}
This formula was proved in \cite[Theorem 4]{Blo00} when $\hat{\vphi}$ is given by a maximum of two locally bounded psh functions. It is still valid in our case because both sides are well-defined (the unbounded locs of $\hat{\vphi}$, i.e. $\cC$,  is a compact set in $\bC^2$) and are continuous with respect decreasing sequences.

Now we claim that each term on the right-hand-side is equal to $[D]\wedge \ddc\psi$, which will give us the identity \eqref{eq-ddchphi} thus finishing the proof.

The last term $\ddc\psi\wedge \ddc\log |z_2|^2=\ddc(\psi \log |z_2|^2)=\ddc\psi \wedge [D]$. We will deal with the first two terms using a same method, partly motivated by \cite[Proof of Theorem 7]{Blo00}.

\begin{enumerate}
\item
For any $\epsilon>0$ set $v_\epsilon=\log(|z_2|^2+\epsilon)$ and $\hat{\vphi}_\epsilon=\max\{\psi(w), v_\epsilon(z_2)\}$. Then $\hat{\vphi}_\epsilon\ge \log \epsilon$ and $\hat{\vphi}_\epsilon$ decreases to $\hat{\vphi}$ as $\epsilon \rightarrow 0+$. 
Pick any test function $\chi$. Using the fact that $\ddc\psi\equiv 0$ on $\{\psi>\log\epsilon\}$, we get:
\begin{eqnarray*}
\int \chi \ddc\hat{\vphi}_\epsilon\wedge \ddc \psi &=&\int \hat{\vphi}_\epsilon \ddc\chi\wedge \ddc \psi=\int_{\{\psi>\log\epsilon\}}\hat{\vphi}_\epsilon \ddc\chi\wedge \ddc\psi+
\int_{\{\psi\le \log\epsilon\}} \hat{\vphi}_\epsilon \ddc\chi\wedge \ddc\psi\\
&=&\int_{\{\psi\le \log\epsilon\}}v_\epsilon\ddc\chi \wedge \ddc\psi=\int v_\epsilon \ddc\chi \wedge \ddc\psi\\
&=&\int \chi \ddc v_\epsilon\wedge \ddc\psi.
\end{eqnarray*}
So we get $\ddc\hat{\vphi}_\epsilon\wedge \ddc\psi=\ddc v_\epsilon\wedge \ddc \psi$. Letting $\epsilon\rightarrow 0+$, we get $\ddc\hat{\vphi}\wedge \ddc\psi =\ddc\log|z_2|^2\wedge \ddc\psi=[D]\wedge \ddc\psi$ as wanted. 
\item To deal with the last term, we set $\psi_j=\max\{\psi, -j\}\ge -j$ and $\hat{\vphi}_j=\max\{\psi_j, \log|z_2|^2\}$ and use the same argument to get $\ddc\hat{\vphi}_j\wedge \ddc\log|z_2|^2= \ddc\psi_j\wedge [D]$ which converges to $\ddc\psi\wedge [D]$ as $j\rightarrow +\infty$. 

\end{enumerate}

\end{proof}

\section{Examples in higher dimensions and a question}\label{sec-hd}

Based on the above example, we can construct counter example to Conjecture \ref{conj-Dem} in higher dimensions. Choose the standard affine coordinate chart on $(\bC^{n-1}, \{z_1,\dots, z_{n-1}\})$ of $\bP^{n-1}$. Let $\mu$ be the Cantor probability measure on the generalized Cantor set $\cC$ which sits inside the $z_1$-line $\ell:=\{z_2=\cdots=z_{n-1}=0\}$ and let $p_\mu(z_1)$ be the potential given by \eqref{eq-potential}. Set 
\begin{equation}
\phi(z)=\max\{\log(1+|z_1|^2)+p_\mu(z_1), \log(|z_2|^2+\cdots+|z_{n-1}|^2)\}.
\end{equation}
Then $\phi(z)$ is a psh function on $\bC^n$ such that $\cC=\{\phi=-\infty\}$ and $\phi$ is continuous outside $\cC$. 
We can extend $\phi$ to be a psh metric on $\cO_{\bP^{n-1}}(\gamma)$ for $1\ll \gamma\in \bN$ as follows.
Denote by $A(r_1, r_2)=\{z\in \bC^{n-1}; r_1\le |z|\le r_2\}$ the standard closed spherical annulus centered at $0\in \bC^{n-1}$ such that $A(0, r)=B(r)=\{z\in \bC^{n-1}; |z|\le r\}$. Set $C_1=\min_{A(2,3)}\phi$ and $C_2=\max_{A(4,5)}\phi$. 
Choose $\gamma\gg 1$ and then $T\in \bR$ such that
\begin{equation}
C_2-\gamma \log (1+4^2)<T<C_1-\gamma \log (1+3^2).
\end{equation}
Set 
\begin{equation}
u(z)=\frac{1}{\gamma} \cdot \left\{
\begin{array}{ll}
\phi-\gamma \log(1+|z|^2) & z\in B(2) \\
\max\{\phi-\gamma \log(1+|z|^2), T\} & z\in A(2,5) \\
T & z\in \bP^{n-1}\setminus B(5).
\end{array}
\right.
\end{equation}
Then $u$ is $\omega_0$-psh function with zero Lelong number and satisfies $u(z)=\gamma^{-1}\cdot (\phi(z)-\gamma \log(1+|z|^2))$ in a neighborhood of $\cC\subset \bC^{n-1}$. 
Then as before we associate to $u$ a psh function
\begin{equation}\label{eq-u2phi}
\vphi=\max\{\log|z|^2+\pi^* u, 2\log|z|^2\}
\end{equation} 
which has an isolated singularity at $0\in \bC^{n}$. We leave it to the reader to use similar arguments as before to verify that $e_n(\vphi_m)=\left(\frac{m-n+1}{m}\right)^{n}$ and $e_{n}(\vphi)=1+\gamma^{-(n-1)}$. 

%It seems to be interesting to find similar examples which are maximal outside $0$. This probably could be constructed by an envelope construction. 
We can also construct counterexamples in neighborhoods of $0\in \bC^n$ that are maximal outside $\{0\}$, by using an argument in \cite[Lemma 2.1]{ACH19} as follows. Fix a bounded pseudoconvex domain $\Omega$ in $\bC^n$. Set
\begin{equation*}
\vphi^j={\rm sup}\left\{v\in {\rm PSH}(\Omega); v\le \vphi-\sup_{\Omega}\vphi \text{ on } B(0, j^{-1})^\circ, \text{ and } v\le 0\ \text{ on } \partial \Omega \right\}.
\end{equation*} 
Then $\{\vphi^j\}$ is an increasing sequence with limit denoted by $\tilde{\vphi}:=\lim_{j\rightarrow+\infty}\vphi^j$.  Note that $\vphi^j\ge \vphi-\sup_\Omega\vphi$. So the limit psh function $\tilde{\vphi}$ is less singular than $\vphi$, which implies $\cJ(m\tilde{\vphi})\supseteq \cJ(m\vphi)$. So $\tilde{\vphi}_{m}$ is also less singular than $\vphi_m$, which implies $e_n(\tilde{\vphi}_m)\le e_n(\vphi_m)$. By \cite[Lemma 2.1]{ACH19}, $\tilde{\vphi}$ satisfies $(\ddc\tilde{\vphi})^n=(\ddc\vphi)^n(\{0\})\delta_0$, whose proof we duplicate here for the reader's convenience. On the one hand, we have $(\ddc \vphi^j)^n=0$ on $\Omega\setminus B(0, j^{-1})$ and $(\ddc \vphi^j)^n=(\ddc \vphi)^n$ on $B(0, j^{-1})^\circ$. Letting $j\rightarrow+\infty$, we get $(\ddc\tilde{\vphi})^n=c \delta_{\{0\}}$ with $c\ge \int_{\{0\}}(\ddc \vphi)^n$. On the other hand, since $\tilde{\vphi}$ is less singular than $\vphi$, by the comparison theorem of Lelong numbers (\cite[III. \S 7]{Dembook}), we get $c\ge \int_{\{0\}}(\ddc \vphi)^n$. So indeed $c=\int_{\{0\}}(\ddc\vphi^n)$.  
In particular $e_n(\tilde{\vphi})=e_n(\vphi)>1$ when $\vphi$ comes from the previous examples.

We make the following conjecture, which could be thought as a local version of a result of Darvas from \cite{Dar17}.
\begin{conj}\label{conj}
Assume $u\in {\rm PSH}(\bP^{n-1}, \omega_0)$. Let $\vphi$ be defined as in \eqref{eq-u2phi}. Then $e_n(\vphi)=1$ if and only if $u \in \cE(\omega_0)$, where $\cE(\omega_0)$ denotes the set of $\omega_0$-psh functions of full mass as studied in \cite{GZ07}.  
\end{conj}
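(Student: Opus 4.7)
The plan is to prove the explicit formula
\[
e_n(\vphi)=1+(1-M(u)),\qquad M(u):=\int_{\bP^{n-1}}\langle(\omega_0+\ddc u)^{n-1}\rangle,
\]
where $\langle\cdot\rangle$ denotes the non-pluripolar product of \cite{GZ07}. Since by definition of the full-mass class one has $u\in\cE(\omega_0)\iff M(u)=1$, this formula implies the conjecture in both directions. The target is consistent with the examples of the paper: in Theorem \ref{thm-main}, $\omega_0+\ddc u$ is a Cantor measure entirely supported on the polar set $\cC$, so $M(u)=0$ and one recovers $e_2(\vphi)=2$; in the construction of Section \ref{sec-hd}, a direct calculation gives $M(u)=1-\gamma^{-(n-1)}$, matching the claimed $e_n(\vphi)=1+\gamma^{-(n-1)}$.

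The first step is to extend the push-pull identity of Lemma \ref{lem-final} to arbitrary dimension. On the blow-up $\rho:X\to\bC^n$ with exceptional divisor $D\cong\bP^{n-1}$, writing $\rho^*\vphi=\log|z_n|^2+\vphi'$ as in Section \ref{sec-mult} with $\vphi'=\max\{\psi,\log|z_n|^2+2\log(1+|w|^2)\}$ and $\psi=\log(1+|w|^2)+u$, one iterates the identity $(\ddc\vphi)^k=\ddc(\vphi(\ddc\vphi)^{k-1})$ and the projection formula, using the chart-wise vanishing $[D\cap\cU]^2=0$ (valid because the normal bundle of the chart slice of $D$ is trivial) to obtain inductively
\[
(\ddc\vphi)^n=\rho_*\Bigl([D]\wedge(\ddc\vphi')^{n-1}+(\ddc\vphi')^n\Bigr),
\]
so that
\[
e_n(\vphi)=\int_D[D]\wedge(\ddc\vphi')^{n-1}+(\ddc\vphi')^n(D).
\]
The first term equals the cohomological mass $\int_D(\omega_0+\ddc u)^{n-1}=1$, independent of $u$. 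The conjecture therefore reduces to showing $(\ddc\vphi')^n(D)=1-M(u)$.

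For this key identity I would generalize the strategy of Lemma \ref{lem-final}: first use Demailly's comparison theorem for generalized Lelong numbers (\cite[III.7.1]{Dembook}) together with the semi-exhaustive property of Remark \ref{rem-semiexh} to replace $\vphi'$ by $\hat\vphi=\max\{\psi,\log|z_n|^2\}$ without changing the mass on $D$; then apply B\l ocki's max-formula iteratively, combined with the regularizations $v_\epsilon=\log(|z_n|^2+\epsilon)$ and $\psi_j=\max(\psi,-j)$, to identify $(\ddc\hat\vphi)^n$ with $[D]$ wedged against the pluripolar part of $(\omega_0+\ddc u)^{n-1}$; by definition of the non-pluripolar product, the total mass of this pluripolar part on $\bP^{n-1}$ equals $1-M(u)$. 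The main obstacle is making this reduction precise for a general $u$: Lemma \ref{lem-final} handles the very special Cantor case where $\langle\omega_0+\ddc u\rangle$ vanishes identically, so the B\l ocki cancellations are clean and the only contribution to $(\ddc\hat\vphi)^n$ is (tautologically) the entire singular part; for general $u$ one must interface the chart-wise Bedford--Taylor calculus with the non-pluripolar product of \cite{GZ07}, and in particular establish continuity of $(\ddc\hat\vphi_j)^n(D)$ along the canonical approximation $u_j=\max(u,-j)$ while ruling out any mass exchange between the non-pluripolar and pluripolar parts of $(\omega_0+\ddc u_j)^{n-1}$ in the limit $j\to\infty$.
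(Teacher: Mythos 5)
First, a framing point: this statement is labeled a \emph{conjecture} in the paper and is not proved there in general. The paper establishes it only when $u$ has analytic singularities, via the explicit intersection formula \eqref{eq-intersection} and the observation that $e_n(\vphi)=1$ forces $E=\emptyset$ because $H-\epsilon E$ is ample for small $\epsilon$. The text after the conjecture explicitly says the general case seems to require a suitable Monge-Amp\`ere operator for arbitrary $u\in\PSH(\omega_0)$, and even the ``$e^u$ continuous, polar locus compact in a chart'' case is left as a plausible but unproved claim.

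More importantly, the master formula you propose, $e_n(\vphi)=1+(1-M(u))$, is \emph{false} for $n\ge 3$, and this is already visible from the paper's own analytic-singularity computation. By telescoping,
\[
1-M(u)=H^{n-1}-(H-cE)^{n-1}=\sum_{p=0}^{n-2}(H-cE)^p\cdot H^{n-2-p}\cdot cE,
\]
so your formula would read $e_n(\vphi)=1+\sum_{p=0}^{n-2}(H-cE)^p\cdot H^{n-2-p}\cdot cE$, i.e.\ it omits exactly the weights $(2^{n-1-p}-1)$ appearing in \eqref{eq-intersection}. Concretely, take the paper's first example, $\cI=\cI_Z$ with $Z=\{z_1=0\}$, $n=3$, $c=1/2$. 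Then $1-M(u)=1-(1-c)^2=3/4$ and you predict $e_3(\vphi)=7/4$; but $\fa=\la z_1\ra\cdot\fm+\fm^4$ has $\mult(\fa)=22$, so $e_3(\vphi)=22/8=11/4$. The extra contribution is precisely the cross terms weighted by powers of $2$, which in the paper's proof arise from the recursion $b_k=2b_{k-1}+(dH-E')^{k-1}\cdot(dH)^{n-1-k}\cdot E'$ on the iterated blow-up $\hat X\to X\to\bC^n$; the non-pluripolar mass $M(u)$ simply does not see the interaction between the two exceptional divisors $\rho_2^*D$ and $\cE'$.

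Your two sanity checks both fall into degenerate cases where all cross terms $(H-cE)^p\cdot H^{n-2-p}\cdot E$ with $p<n-2$ vanish, which is why they appear to confirm the formula. In $n=2$ there is only $p=0$ and the weight is $2^1-1=1$. In the paper's second example ($\cI=\cI_p$, a point) and in the Section~\ref{sec-hd} construction, the singular locus has codimension $n-2$ in $\bP^{n-1}$, so $H\cdot E=0$ and again only the $p=n-2$ term, with weight $1$, survives. So the agreement is a coincidence of your test cases, not evidence for the formula.

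The push-pull reduction $e_n(\vphi)=1+(\ddc\vphi')^n(D)$ is fine and consistent with the paper's computation in the analytic case. The gap is the claimed identification $(\ddc\vphi')^n(D)=1-M(u)$: this is where the argument fails, and no amount of care with B\l ocki's formula or with the $u_j=\max(u,-j)$ approximation will rescue it, because the equality is simply false for generic analytic singularities in dimension $n\ge 3$. If you want to pursue the conjecture (as opposed to a closed formula), the paper's own route in the analytic case is instructive: it does not compute $(\ddc\vphi')^n(D)$ in terms of $M(u)$ at all, but instead shows all terms in \eqref{eq-intersection} are nonnegative and vanish simultaneously iff $E=\emptyset$, using ampleness of $H-\epsilon E$. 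A proof in the general case would have to find an analogous positivity/vanishing mechanism, not a mass identity.
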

By using similar argument like before, one should be able to prove this when $e^u$ is continuous and $u=\{-\infty\}$ is contained in a compact subset of an affine chart. 
To establish this in general, it seems to require an appropriate definition of the Monge-Amp\`{e}re operator for general $u\in {\rm PSH}(\omega_0)$. 

We can also verify the above conjecture when $u$ has analytic singularities. In fact we will derive a formula for $e_n(\vphi)$ in this case, which seems to be interesting in its own right.
Recall that an $\omega_0$-psh function $u$ has analytic singularities if there is an ideal sheaf $\cI$ and $c\in \bR_{>0}$ such that for any point $x\in \bP^{n-1}$ there exists an open set $U$ such that $u-c \log \sum_i |f_i|^{2}$ is bounded on $U$ where $\{f_1,\dots, f_r\}$ are generators of $\cI(U)$. In this case, we say that the singularities of $\vphi$ are modeled on $\cI^c$. By the metric characterization of normalization (see \cite[Remark 9.6.10]{Laz04}), we can assume that the ideal sheaf is normal. Let $\rho_0: \hat{D}\rightarrow \bP^{n-1}$ be the (normalized) blowup of $\cI$ with exceptional divisor denoted by $E$. Denote by $H$ the hyperplane line bundle of $\bP^{n-1}$. Then the $\bR$-line bundle $\rho_0^*H-cE$ admits a bounded psh metric. For simplicity of notations, we will sometimes just use $H$ to denote the pull back of hyperplane line bundle under birational morphisms. 
\begin{prop}
Assume that $u\in {\rm PSH}(\omega_0)$ has analytic singularities and $\vphi$ is defined by the formula \eqref{eq-u2phi}.
With the above notations, we have the identity:
\begin{eqnarray}\label{eq-intersection}
e_n(\vphi)&=& 1+\sum_{p=0}^{n-2}(2^{n-1-p}-1)(H-cE)^p\cdot H^{n-2-p}\cdot cE.
%&=&(H-cE)^{n-1}+(2H)^{n-1}-.
\end{eqnarray}
%Moreover, the identity holds true if $\rho_0^*H-cE$ is a semiample $\bQ$-divisor. 
\end{prop}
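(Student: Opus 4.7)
My plan is to lift the whole setup to a log resolution of $\cI$ and then compute $e_n(\vphi) = (\ddc\vphi)^n(\{0\})$ via the push-forward formula combined with intersection theory on the resolution, directly generalizing the strategy used for Lemma \ref{lem-final}. Let $\rho_0: \hat D \to \bP^{n-1}$ be the normalized blowup of $\cI$ with SNC exceptional divisor $E$, so that $\rho_0^* u = c \log|f_E|^2 + v$ with $v$ continuous (bounded). I would form $\hat{\mcX} = \mathrm{Tot}(\rho_0^*\mcO_{\bP^{n-1}}(-1))$ with the induced birational morphism $\hat\rho: \hat{\mcX} \to \bC^n$ contracting the zero section $\hat D$ to the origin, and with bundle projection $\hat\pi: \hat{\mcX} \to \hat D$. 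In this setup $\hat\rho^*\vphi = \max\{\log \hat h, 2\log \hat h_0\}$ where $\hat h_0$ is the pulled-back tautological metric and $\hat h = \hat h_0 \cdot e^{\hat\pi^*\rho_0^*u}$, and we have the clean decompositions
\[
\ddc \log\hat h_0 = [\hat D] + \hat\pi^*\rho_0^*\omega_0, \qquad \ddc\log \hat h = [\hat D] + c\hat\pi^*[E] + \hat\pi^*\tilde\mu,
\]
with $\tilde\mu = \rho_0^*\omega_0 + \ddc v$ a bounded psh current in the class $H - cE$ on $\hat D$.

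The heart of the proof would be to expand $(\ddc\hat\rho^*\vphi)^n = (\ddc \max\{\log\hat h, 2\log\hat h_0\})^n$ on $\hat{\mcX}$ by iterating Blocki's formula, in the style of the proof of Lemma \ref{lem-final}. After first regularizing $v$ by smooth decreasing approximations $v_j \searrow v$ and passing to the limit, each application of the max decomposition produces wedge products of $[\hat D]$, $\hat\pi^*(c[E])$, $\hat\pi^*\tilde\mu$, and $\hat\pi^*\rho_0^*\omega_0$ weighted by combinatorial coefficients that track how many factors come from the $2\log\hat h_0$ side (each contributing an extra factor of $2$) versus the $\log\hat h$ side. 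Pushing forward by $\hat\rho_*$ localizes the mass on the fiber $\hat\rho^{-1}(0) = \hat D$; using that $N_{\hat D/\hat{\mcX}} = \rho_0^*\mcO(-1)$ so $[\hat D]|_{\hat D} \equiv -H$, each surviving term becomes an intersection number on $\hat D$ in $H$ and $cE$.

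A final combinatorial simplification should then yield \eqref{eq-intersection}: the leading $1 = H^{n-1}$ accounts for the ``pure $\log\hat h$'' contribution, while the sum $\sum_{p=0}^{n-2}(2^{n-1-p}-1)(H-cE)^p H^{n-2-p}\cdot cE$ collects the excess mass generated along $E$, with the factor $(2^{n-1-p}-1)$ reflecting the $2^{n-1-p}$ ways of distributing $n-1-p$ doubled factors coming from $2\log\hat h_0$, minus the all-$\log\hat h$ case (already counted by the leading $1$).

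The main obstacle I anticipate is the combinatorial bookkeeping in the iterated max formula together with the careful handling of the singularity along $E$. As in Lemma \ref{lem-final}, since $v$ is only bounded rather than smooth, the truncations $\psi_j = \max\{\rho_0^*u, -j\}$ together with the comparison theorem for generalized Lelong numbers (\cite[III.7.1]{Dembook}) would be needed to pass to the limit in the intersection-theoretic formulas near $\hat D \cap \hat\pi^{-1}(E)$ where both $\log\hat h$ and $2\log\hat h_0$ diverge simultaneously. A related technical point is that powers $[\hat D]^k$ for $k \geq 2$ on $\hat{\mcX}$ must be interpreted correctly via $[\hat D]|_{\hat D} = -H$, mirroring the elementary computation $(\ddc\log|z|^2)^n = \delta_0$ on the blowup, to ensure that all surviving terms in the expansion reduce cleanly to intersection numbers on $\hat D$.
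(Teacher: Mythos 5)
Your plan takes a genuinely different route from the paper. The paper first handles rational $c$ (with $\rho_0^*H-cE$ semiample) by replacing $\vphi$ with $d^{-1}\max\{\log\sum|f_i|^2, 2d\log|z|^2\}$, invoking the comparison theorem and \cite[Lemma 2.1]{Dem09} to reduce $e_n(\vphi)$ to the Hilbert--Samuel multiplicity $d^{-n}\mult(\fa)$ of the ideal $\fa=\langle f_i\rangle+\fm^{2d}$, and then computes that multiplicity purely algebraically by a recursion $a_k,b_k$ on the normalized blowup of $\cI'+\cI_D^d$. For general $c\in\bR_{>0}$ the paper does use a push-forward formula, but crucially it first blows up the ideal $\cI'+\cI_D^d$ in full, so that $\rho^*\vphi=\log|F|^2+\vphi'$ with $\vphi'$ \emph{bounded}, and a single application of \cite[Proof of (4.5)]{AW14} gives the intersection number $(-\rho_2^*D-c\cE)^{n-1}\cdot(\rho_2^*D+\cE)$; no expansion of a max is needed.

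The main gap in your plan is precisely at the point where you diverge from the paper. On your $\hat{\mcX}=\mathrm{Tot}(\rho_0^*\cO(-1))$, the pullback $\hat\rho^*\vphi=\log|t|^2+\max\{\psi(w),\log|t|^2+O(1)\}$ has a residual part that is \emph{not} bounded near $\hat D\cap\hat\pi^{-1}(E)$, so the Andersson--Wulcan push-forward cannot be applied in one step, and you are forced into expanding the $n$-th Monge--Amp\`ere power of a max. However, B\l ocki's identity \cite[Theorem 4]{Blo00} as used in the proof of Lemma \ref{lem-final} is specifically the $n=2$ statement for a max of psh functions in disjoint variable groups; there is no off-the-shelf ``iterated B\l ocki formula'' producing the clean linear-combinations-of-wedges expansion you describe for $(\ddc\max\{u,v\})^n$ with general $n$, and the heuristic count (``$2^{n-1-p}$ ways of distributing doubled factors minus the all-$\log\hat h$ term'') is not an argument --- in the paper the coefficient $2^{n-1-p}-1$ arises from a concrete geometric recursion $b_k=2b_{k-1}+(dH-E')^{k-1}\cdot H^{n-1-k}\cdot E'$ on the fully resolved $\hat X$, driven by the identity $\tilde\xi|_{\cE'}=\cO_{\cE'}(E')\otimes\pi^*H^d$, which has no immediate analogue on your partial resolution. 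Until you either (i) supply and justify the precise $n$-variable expansion you intend to use and verify the resulting bookkeeping matches \eqref{eq-intersection}, or (ii) pass to the further blowup of $\cI'+\cI_D^d$ as the paper does so that the max disappears entirely, the argument is incomplete.
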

%\begin{rem}
%We expect that \eqref{eq-intersection} is always an identity.  
%\end{rem}
\begin{proof}
We first deal with the case when $A:=\rho_0^*H-cE$ is a semiample $\bQ$-divisor. The proof in this case is reduced to a purely algbro-geometric calculation by the following construction. 
Choose $d \gg 1$ such that $cd\in \bN$ and $\cO_{\hat{D}}(dA)$ is globally generated. 
Let $\{s_1,\dots, s_N\}$ be a basis of $H^0(\hat{D}, \cO_{\hat{D}}(dA))$ and $f_i=s_i\cdot (dc) s_E$ be the corresponding homogeneous polynomial of degree $d$. Consider the psh function:
\begin{equation}
\phi_d(z)=\max\left\{\log \sum_i |f_i|^2, 2 d \log|z|^2 \right\}
\end{equation}
Set $\fm=(z_1,\dots, z_n)$ and $\fa=\la f_i; i=1,\dots, N\ra+\fm^{2d}$. Then by the comparison theorem for generalized Lelong numbers (\cite[III.7.1]{Dembook}) and \cite[Lemma 2.1]{Dem09}, we get the identity:
\begin{equation}
e_n(\vphi)=e_n(d^{-1}\phi_d)=d^{-n}\mult(\fa)
\end{equation}
where $\mult(\fa)$ is the Hilbert-Samuel multiplicity of the primary ideal $\fa$ which we now calculate using intersection theory. 
%where $\{q_j\}$ is a basis for the set of homogeneous polynomials of degree $2 d$. intersection theory to calculate $\mult(\fa)$.

Let $\rho_1: X\rightarrow \bC^n$ be the standard blowup of the origin with exceptional divisor denoted by $D$ which is isomorphic to $\bP^{n-1}$. We get the identity $\rho_1^*\fa=\mcO(-d D)\cdot (\cI'+\cI_D^d)$ where $\cI'=\cI^{cd}$.  As mentioned above, we assume that $\cI$ is normal. Let $\rho_2: \hat{X}\rightarrow X$ be the (normalized) blowup of $\cI'+\cI_D^d$. Then $\hat{X}$ is given by: 
\begin{equation*}
\hat{X}={\rm Proj}\left(\bigoplus_{m=0}^{+\infty} {(\cI'+\cI_D^d)^m}\right).
\end{equation*}
Note that ${(\cI'+\cI_D^d)^m}=\sum_{k=0}^{m}{\cI'^k}\cdot \cI_D^{d(m-k)}$. 
The exceptional divisor is given by:
\begin{equation*}
\cE'={\rm Proj}\left(\bigoplus_{m=0}^{+\infty} \sum_{k=0}^{m} {\cI'^k}/{\cI'^{k+1}}\cdot \cI_D^{d(m-k)}\right).
\end{equation*}
Let $\tilde{\xi}$ be the line bundle $\cO(-\cE')$. Then $\tilde{\xi}|_{\cE'}=\cO_{\cE'}(1)$. 

On the other hand, let $\rho_0: \hat{D}\rightarrow D$ be the normalized blowup of $D\cong\bP^{n-1}$ along $Z$, which is also strict transform of $D$ under $\rho_2$. Then $\hat{D}$ is given by:
\begin{equation*}
\hat{D}={\rm Proj}\left(\bigoplus_{m=0}^{+\infty}{\cI'^m} \right)
\end{equation*}
with exceptional divisor given by:
$$
E'={\rm Proj}\left(\bigoplus_{m=0}^{+\infty} {\cI'^m}/{\cI'^{m+1}} \right).
$$ 
Let $\xi$ be the line bundle $\cO(-E')$. Then $\xi|_{E'}=\cO_{E'}(1)$.

Let $\tilde{\xi}'=\tilde{\xi}\otimes \rho_2^*H^{-d}$. Then $\tilde{\xi}'|_{\cE'}\sim \cO_{\cE'}(E')$. So we get $\tilde{\xi}|_{\cE'}=\tilde{\xi}'\otimes \rho_2^*H^d|_{\cE'}=\cO_{\cE'}(E')\otimes \pi^*H^d$.

Set $\rho=\rho_2\circ \rho_1$. Then $\rho^*\fa=\rho_2^*\cO_X(-d D)\cdot \cO_{\hat{X}}(-\cE')$ and the multiplicity of $\fa$ is given by:
\begin{equation}\label{eq-multint}
\left(-\rho_2^*(dD)-\cE'\right)^{n-1} \cdot (\rho_2^*(dD)+\cE')=(\rho_2^*(dH)+\tilde{\xi})^{n-1}\cdot (\rho_2^*(dD)+\cE').
\end{equation}

Set $\rho_2^*(dH)=x$ and $a_k=(x+\tilde{\xi})^k\cdot x^{n-1-k}\cdot \rho_2^*(dD)$. Then 
\begin{eqnarray*}
a_k&=&(x+\tilde{\xi})^{k-1}\cdot x^{n-k} \cdot \rho_2^*(dD)+(x+\tilde{\xi})^{k-1}\cdot \tilde{\xi}\cdot x^{n-1-k} \cdot \rho_2^*(dD)\\
&=&a_{k-1}+(x+\tilde{\xi})^{k-1}\cdot x^{n-1-k}\cdot \rho_2^*(dD) \cdot (-\cE')=a_{k-1}+b_{k-1}
\end{eqnarray*}
where $b_k=(x+\tilde{\xi})^k\cdot x^{n-1-k}\cdot \cE'$. 
Note that $a_0=x^{n-1}\cdot \rho_2^*(dD)=d^n H^{n-1}\cdot \bP^{n-1}=d^n$. 

On the other hand, we have: 
\begin{eqnarray*}
b_k&=&(x+\tilde{\xi})^{k-1}\cdot x^{n-k}\cdot \cE'+(x+\tilde{\xi})^{k-1}\cdot x^{n-1-k} \cdot (\tilde{\xi}'+x) \cdot \cE'\\
&=&2 (x+\tilde{\xi})^{k-1}\cdot x^{n-k}\cdot  \cE'+(dH-E')^{k-1}\cdot (dH)^{n-1-k}\cdot E'\\
&=&2 b_{k-1}+ (dH-E')^{k-1}\cdot H^{n-1-k}\cdot E'.
\end{eqnarray*}
Since $b_0=0$, by induction we get:
\begin{eqnarray*}
b_k&=&\sum_{j=1}^{k} 2^{j-1} (dH-E')^{k-j}\cdot (dH)^{n-2-k+j}\cdot E'\\
&=&d^n\sum_{j=1}^k 2^{j-1} (H-c E)^{k-j}\cdot H^{n-2-k+j}\cdot (cE).
\end{eqnarray*}
So we get the formula for $\mult(\fa)$:
\begin{eqnarray*}
\mult(\fa)&=&a_{n-1}+b_{n-1}=H^n+\sum_{k=1}^{n-2} b_k+b_{n-1}=d^n+\delta
\end{eqnarray*}
where we have:
\begin{eqnarray*}
d^{-n} \delta&=&\sum_{k=1}^{n-1} d^{-n} b_k=\sum_{k=1}^{n-1}\sum_{j=1}^{k}2^{j-1}(H-cE)^{k-j}\cdot H^{n-2-k+j}\cdot (cE)\\
&=&\sum_{p=0}^{n-2} \sum_{j=1}^{n-p} 2^{j-1} (H-cE)^p\cdot H^{n-2-p}\cdot (cE)\\
&=&\sum_{p=0}^{n-2} (2^{n-1-p}-1)(H-cE)^p\cdot H^{n-2-p}\cdot cE.
\end{eqnarray*}
Combining the above identities, we get the formula \eqref{eq-intersection} when $c\in \bQ_{>0}$.  

Now we consider the general case. The claim is that the above algebraic calculation applies equally well to the general case, i.e. to general $c\in \bR_{>0}$. 
As in the proof of \eqref{eq-push}, this claim follows from the work \cite{AW14}. Indeed, if we use the same blowup process $\rho=\rho_2\circ \rho_1: \hat{X}\rightarrow \bC^n$ as above with $\rho_2^*\cI=\cO_{\hat{X}}(-\cE)$, then we have the identity:
\begin{equation*}
\rho^*\vphi=\log|F|^2+\vphi'=\log|F_1|^2+\log |F_2|^{2c}+\vphi'
\end{equation*} 
where $|F|^2=|F_1|^2 |F_2|^{2c}$ with $F_1$ (resp. $F_2$) being a locally defined holomorphic functions on $\hat{X}$ defining the Cartier divisor $\rho_2^*D$ (resp. $\cE$), and $\vphi'$ is a bounded psh function locally defined up to a pluriharmonic function. $\ddc\vphi'$ is globally defined and represents the curvature of the $\bR$-line bundle associated to the $\bR$-Cartier divisor $-\rho_2^*D-c \cE$. By the same argument as in \cite[Proof of (4.5)]{AW14}, we can calculate as follows:
\begin{eqnarray*}
(\ddc\vphi)^n&=&\rho_*\ddc((\rho^*\vphi)({\bf 1}_{\hat{X}\setminus {\rm Supp}(\rho_2^*D\cup \cE)}(\ddc\vphi)^{n-1})\\
&=&\rho_* \ddc ((\log|F|^2+\vphi')(\ddc\vphi')^{n-1})\\
&=& \rho_* (([\rho_2^*D]+c[\cE])\wedge (\ddc\vphi')^{n-1}+(\ddc\vphi')^n)
\end{eqnarray*}
So we get:
\begin{equation*}
(\ddc\vphi)^n(\{0\})=\int_{\rho_2^*D+c \cE}(\ddc\vphi')^{n-1}=(-\rho_2^*D-c \cE)^{n-1}\cdot (\rho_2^*D+\cE).
\end{equation*}
Now the same method for calculating \eqref{eq-multint} applies without change to any $c\in \bR_{>0}$, and hence we have proved our formula \eqref{eq-intersection}.  
%First we know that there exists $\epsilon\ll c$ such that $\rho_0^*H-\epsilon E$ is ample on $\hat{D}$. 
%So for any $t>0$, $(1-t)(H-cE)+t (H-\epsilon E)=H-((1-t)c+t\epsilon)E$ is ample and hence semiample. 
%Fix an increasing sequence of rational numbers $\{c_k\}\subset \bQ_{>0}$ that converges to $c$. 
%Note that
%\begin{equation*}
%A_k:=\rho_0^*H-c_k E=(1-\frac{c-c_k}{c-\epsilon})(\rho_0^*H-cE)+\frac{c-c_k}{c-\epsilon}(\rho_0^*H-\epsilon E)
%\end{equation*}
%is a semiample $\bQ$-divisor. 
%Let $u_k$ be a $\omega_0$-psh metric on $H$ induced by the identity $$\vphi_{A_k}+c_k\log |s_E|^2-\vphi_H=\frac{c_k}{c}\vphi_A+(1-\frac{c_k}{c})\vphi_H+c_k\log|s_E|^2-\vphi_H$$
%where $\vphi_A$ and $\vphi_H$ denote fixed smooth psh metrics on $A$ and $H$ respectively. Then $u_k$ is less singular than $u$. If we use $u_k$ to define $\vphi_k$ using formula \eqref{eq-u2phi}, then $\vphi_k$ is less singular than $\vphi$. This implies $e_n(\vphi)\ge e_n(\vphi_k)$. Since the right-hand-side of \eqref{eq-intersection} is continuous in $c$, we let $k\rightarrow +\infty$ to get $e_n(\vphi)$ is indeed bounded below by the right-hand-side of \eqref{eq-intersection}. 
%To get the other direction, we choose a sequence of decreasing sequence of rational numbers $\{c'_k\}\subset \bQ_{>0}$ that converges to $c$. 
\end{proof}
\begin{exmp}
Assume that $\cI=\cI_{Z}$ where $Z=\{z_1=0\}\subset \bP^{n-1}$. For any $c\in [0,1]$, we get an $\omega_0$-psh function $u$ with analytic singularities modeled on $\cI^c$. We then have the identity:
\begin{eqnarray*}
e_n(\vphi)&=&1+\sum_{k=0}^{n-2}(2^{n-1-k}-1)(H-cH)^k\cdot H^{n-2-k}\cdot c H\\
&=&1+c\cdot \sum_{k=0}^{n-2}(2^{n-1-k}-1)(1-c)^k
\end{eqnarray*}
When $c=\frac{p}{q}$ with $p\le q$ and $p, q$ relatively prime, we can choose $d=q$ to get a monomial ideal $\fa=\la z_1^p\ra\cdot\fm^{q-p}+\fm^{2q}$.
Using Newton polytope associated to $\fa$, it is easy to calculate 
%$$\mult(\fa)=q^3+4p q^2-p^2q.$$ 
$$\mult(\fa)=(q-p)^n+p \sum_{k=0}^{n-1}(q-p)^k\cdot (2q)^{n-1-k}.$$
Moreover it is easy to verify the identity:
%\begin{equation}
%e_3(\vphi)=2^{-3}\mult(\fa)=1+(4-1) H\cdot (\frac{1}{2}H)+(2-1) (H-\frac{1}{2}H)\cdot \frac{1}{2}H=\frac{11}{4}.
%\end{equation}
\begin{eqnarray*}
e_n(\vphi)%&=&1+\sum_{k=0}^{n-2}(2^{n-1-k}-1)(H-\frac{p}{q}H)^k\cdot H^{n-2-k}\cdot \frac{p}{q}H\\
&=&q^{-n}\left(q^n-p\sum_{k=0}^{n-1}(q-p)^k\cdot q^{n-1-k}+p\cdot \sum_{k=0}^{n-1}(q-p)^k \cdot (2q)^{n-1-k}\right)\\
&=&q^{-n}\mult(\fa).
\end{eqnarray*}
%\begin{eqnarray*}
%e_3(\vphi)&=&q^{-3}\mult(\fa)=1+(4-1)H\cdot \frac{p}{q}H+(2-1)(H-\frac{p}{q}H)\cdot \frac{p}{q}H\\
%&=&1+\frac{4p}{q}-\frac{p^2}{q^2}=1+4c-c^2.
%\end{eqnarray*}
\end{exmp}
\begin{exmp}
Assume $\cI=\cI_p$ where $p=[0,\cdots, 0, 1]$. For any $c\in [0,1]$, we get an $\omega_0$-psh function $u$ with analytic singularities modeled on $\cI^c$. We then have the identity:
\begin{eqnarray*}
e_n(\vphi)&=&1+\sum_{k=0}^{n-2}(2^{n-1-k}-1)(H-cE)^k\cdot H^{n-2-k}\cdot cE\\
&=&1+c^{n-1}.
\end{eqnarray*}
When $c=p/q$ with $p\le q$ and $p,q$ relatively prime, we can choose $d=q$ to get a monomial ideal $\fa=\la z_1, \dots, z_{n-1}\ra^p\cdot \fm^{q-p}+\fm^{2q}$. Using the Newton polytope associated to $\fa$, it is easy to calculate %$\mult(\fa)=q^3+p^2q$.
$$
\mult(\fa)=p^{n-1}(q+p)+(q-p)\cdot \sum_{k=0}^{n-1} p^k \cdot q^{n-1-k}=p^{n-1}q+q^n.
$$
%Let $n=2$, $c=1/3$ and $\cI=\cI_{p}$ where $p=[0,0,1]$. Choosing $d=3$, we get a monomial ideal
%$\fa=\la z_1^3, z_2^3, z_1^2z_2, z_1^2z_3, z_2^2z_1, z_2^2z_3, z_3^2z_1, z_3^2z_2, z_1z_2z_3, q_j; 1\le j\le 28\ra$ where $\{q_j\}$ is a basis for the set of homogeneous monomials of degree $6$. Using the polytope associated to $\fa$, it is easy to calculate $\mult(\fa)=30$. Then
%\begin{equation*}
%e_3(\vphi)=3^{-3}\mult(\fa)=1+(4-1)\rho_0^*H\cdot \frac{1}{3}E+(2-1)(\rho_0^*H-\frac{1}{3}E)\cdot \frac{1}{3}E=\frac{10}{9}.
%\end{equation*}
Again it is immediate to verify the identity:
\begin{eqnarray*}
e_n(\vphi)&=&1+\frac{p^{n-1}}{q^{n-1}}=q^{-n}\mult(\fa).
\end{eqnarray*}
%\begin{eqnarray*}
%e_3(\vphi)&=&q^{-3}\mult(\fa)=1+(4-1)\rho_0^*H\cdot \frac{p}{q}E+(2-1)(\rho_0^*H-\frac{p}{q}E)\cdot \frac{p}{q}E\\
%&=&1+\frac{p^2}{q^2}=1+c^2.
%\end{eqnarray*}
\end{exmp}

\begin{cor}
Assume that $u\in {\PSH}(\omega_0)$ has analytic singularities.
Then $e_n(\vphi)=1$ if and only if $\cI=\cO_{\bP^{n-1}}$, if and only if  $u\in \cE(\omega_0)$. 
\end{cor}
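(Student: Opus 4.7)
The plan is to organize the argument around the intersection-theoretic formula $e_n(\vphi)=1+\sum_{p=0}^{n-2}(2^{n-1-p}-1)(H-cE)^p H^{n-2-p}\cdot cE$ from the preceding proposition, and then establish the two equivalences separately.

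I would first dispatch the equivalence $\cI=\cO_{\bP^{n-1}}\iff u\in\cE(\omega_0)$ under the analytic singularity hypothesis. The forward direction is immediate: trivial $\cI$ forces the analytic singularities of $u$ to be trivial, so $u$ is bounded, and bounded $\omega_0$-psh functions automatically belong to $\cE(\omega_0)$. For the converse, I would invoke the fact that for $u$ with analytic singularities modeled on $\cI^c$, the non-pluripolar Monge-Amp\`{e}re mass $\int_{\bP^{n-1}}(\omega_0+\ddc u)^{n-1}_{\rm np}$ computed on the log resolution $\hat{D}$ equals $(H-cE)^{n-1}$. Membership in $\cE(\omega_0)$ requires this number to equal $H^{n-1}$, and using the telescoping factorization
$$H^{n-1}-(H-cE)^{n-1}=cE\cdot\sum_{i=0}^{n-2}H^i(H-cE)^{n-2-i},$$
which displays a sum of nonnegative intersection numbers (nef classes paired with the effective cycle $cE$), one concludes that equality forces $cE=0$, hence $\cI=\cO_{\bP^{n-1}}$.

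For the equivalence $e_n(\vphi)=1\iff\cI=\cO_{\bP^{n-1}}$, the reverse direction is immediate from the formula, since $\cI=\cO$ makes $E=0$ and every summand vanishes. For the forward direction, each summand $(H-cE)^p H^{n-2-p}\cdot cE$ in the formula is nonnegative (a nef intersection with the effective divisor $cE$), and the coefficients $2^{n-1-p}-1\ge 1$ for every $0\le p\le n-2$, so a termwise comparison together with the telescoping identity yields the key lower bound
$$e_n(\vphi)-1\ \ge\ \sum_{p=0}^{n-2}(H-cE)^p H^{n-2-p}\cdot cE\ =\ H^{n-1}-(H-cE)^{n-1}\ \ge\ 0.$$
Thus $e_n(\vphi)=1$ forces $(H-cE)^{n-1}=H^{n-1}$, and the first equivalence then delivers $\cI=\cO$.

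The main obstacle will be the rigidity step inside the first equivalence: if $cE\neq 0$ is effective with $(H-cE)^{n-1}=H^{n-1}$, why must $cE=0$? I expect the cleanest route is through pluripotential theory, observing that a $\omega_0$-psh function with nontrivial analytic singularities has strictly positive Lelong numbers along $Z=V(\cI)$, which concentrates strictly positive non-pluripolar mass there and hence makes $(H-cE)^{n-1}<H^{n-1}$. A purely algebro-geometric alternative via Khovanskii-Teissier inequalities or the negativity of $\rho_0$-exceptional divisors is also available but more delicate when $\dim Z<n-2$, where $H^{n-2}\cdot cE=0$ automatically and one must extract strict positivity from the higher-order terms $H^i(H-cE)^{n-2-i}\cdot cE$ in the telescoping sum using Zariski-lemma-style negativity of the exceptional intersection form.
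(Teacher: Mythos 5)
Your overall strategy matches the paper's: use the intersection formula to reduce both equivalences to the single question of whether $\delta:=H^{n-1}-(H-cE)^{n-1}=\sum_{p=0}^{n-2}(H-cE)^pH^{n-2-p}\cdot cE$ vanishes, relying on nonnegativity of each summand. Your derivation of the termwise lower bound $e_n(\vphi)-1\ge\delta\ge 0$ is correct and is essentially what the paper does.

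However, you have a genuine gap at exactly the point you flag as ``the main obstacle,'' and neither of the two routes you sketch is adequate as stated. The pluripotential route is not correct as phrased: the non-pluripolar Monge--Amp\`ere operator by construction does \emph{not} charge pluripolar sets, so positive Lelong numbers along $Z=V(\cI)$ do not ``concentrate strictly positive non-pluripolar mass there.'' What you want to argue is that the \emph{total} non-pluripolar mass $(H-cE)^{n-1}$ drops below $H^{n-1}$ when $cE\neq 0$, but that is precisely the assertion to be proved, so this formulation is circular. Your algebro-geometric alternative correctly identifies the difficulty when $\dim Z<n-2$ (indeed $H^{n-2}\cdot E=0$ automatically there, so the $p=0$ summand is useless), but you leave the Zariski-negativity argument as a sketch and call it ``delicate.'' The paper's rigidity step is both simpler and complete: write
\begin{equation*}
\delta \;=\; H^{n-1}-(H-cE)^{n-1}\;=\;(n-1)\int_0^c (H-tE)^{n-2}\cdot E\,dt,
\end{equation*}
and observe that $H-tE$ is ample on the blowup $\hat{D}$ for $0<t\ll 1$, so $(H-tE)^{n-2}\cdot E=\left((H-tE)|_E\right)^{n-2}>0$ whenever $E\neq\emptyset$; hence $\delta=0$ forces $E=\emptyset$, i.e.\ $\cI=\cO_{\bP^{n-1}}$. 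This single calculus identity replaces both of your alternatives and handles all dimensions of $Z$ uniformly. You should adopt it to close the gap.
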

\begin{proof}
If $\cI=\cO_{\bP^{n-1}}$ then $\vphi$ has the same type of singularity as $\log|z|^2$ and hence $e_n(\vphi)=1$. 
Formula \eqref{eq-intersection} implies that $e_n(\vphi)=1$ only if 
\begin{equation}\label{eq-vanish}
(H-cE)^p \cdot H^{n-2-p}\cdot cE=0, \text{ for all } p=0,\dots, n-2.
\end{equation}
Note that \begin{equation*}
\delta:=H^{n-1}-(\rho_0^*H-cE)^{n-1}=\sum_{p=0}^{n-2} (H-cE)^p\cdot H^{n-2-p} \cdot (cE).
\end{equation*}
On the other hand, the non-pluripolar volume of $\omega_0+\ddc u$ is equal to $(\rho_0^*H-cE)^{n-1}$. 
So $u\in \cE(\omega_0)$ if and only if $\delta=0$, if and only if the condition \eqref{eq-vanish} holds true. 

Note that 
$$
\delta=H^{n-1}-(H-cE)^{n-1}=(n-1) \int_0^c (H-t E)^{n-2}\cdot E dt.
$$
Because $H-\epsilon E$ is ample for $0<\epsilon \ll 1$, $\delta=0$ if and only if $E=\emptyset$, i.e. $\cI=\cO_{\bP^{n-1}}$.

% if and only if $E=\emptyset$.

\end{proof}
%Finally, it would be interesting to find a maximal $\vphi$ that violates the convergence. 
%\begin{equation}
%\cJ(m\vphi)=\{f\in \mcO(\bB); \int_{\bB} |f|^2e^{-m\vphi}d\lambda<+\infty\}.
%\end{equation}
%Let $\{f^{(m)}_i\}$ be an orthonormal basis 

%More concretely we consider the following diagram:
% \begin{equation}\label{eq-common}
%\xymatrix{% @R=1.5pc @C=0.5pc{
%& \ar_{p_1}[ld] \mcZ \ar^{p_2}[d]   \ar^{p_3}[rd] & \\
%\mcX  \ar@{-->}[r]  \ar_{\pi_{\mcX}}[rd] &  X_\bC   & \ar^{\pi_\mcY}[ld] \mcY \ar@{-->}[l]   \\
% & \bC  & %\\
%Z_\bC=Z^{(1)}_\bC \ar^{\bar{\sigma}_\xi}@{-->}[rr]  &  & Z_\bC=Z^{(2)}_\bC 
%}
%\end{equation}

%\appendix

%\begin{eqnarray*}
%\bfM_\psi(\vphi)&=&\int_0^1 dt \int_X\dot{\vphi} n(-Ric((\ddc\vphi))+Ric(\Omega))-n Ric(\Omega)+ \udS (\ddc\vphi))\wedge (\ddc\vphi)^{n-1}\\
%&=&\int_0^1 dt\; n \int_X \dot{\vphi}\left(\ddc\log\frac{(\ddc\vphi)^n}{\Omega}\right)\wedge (\ddc\vphi)^{n-1}-n E^{Ric(\Omega)}_\psi(\vphi)+\udS E_\psi(\vphi)\\
%&=&\int_X \log \frac{(\ddc\vphi)^n}{\Omega}(\ddc\vphi)^n-\int_X \log\frac{(\ddc\psi)^n}{\Omega}(\ddc\psi)^n-n E^{Ric(\Omega)}_\psi(\vphi)+\udS E_\psi(\vphi)\\
%&=&\int_X \log\frac{(\ddc\vphi)^n}{(\ddc\psi)^n}(\ddc\vphi)^n-n E^{Ric(\psi)}_\psi(\vphi)+\udS E_\psi(\vphi).
%\end{eqnarray*}

\vskip 3mm

\noindent
Department of Mathematics, Purdue University, West Lafayette, IN, 47907-2067.

\noindent
{\it Current address:} Department of Mathematics, Rutgers University, Piscataway, NJ 08854-8019.

\noindent
{\it E-mail address:} chi.li@rutgers.edu

\vskip 2mm

%\noindent
%School of Mathematical Sciences and BICMR, Peking University, Yiheyuan Road 5, Beijing, P.R.China, 100871

%\noindent {\it E-mail address:} tian@math.princeton.edu

%\vskip 2mm

%\noindent
%School of Mathematical Sciences, Zhejiang University, Zheda Road 38, Hangzhou, Zhejiang,
%310027, P.R. China

%\noindent {\it E-mail address:} wfmath@zju.edu.cn

\end{document}